\documentclass[11pt, reqno]{amsart}

\usepackage[margin=1.2in]{geometry}
\usepackage{amssymb,amscd,amsmath}
\usepackage{bbm}
\usepackage{nicematrix}
\usepackage[colorlinks=true,citecolor=blue,linkcolor=blue]{hyperref}
\usepackage[dvipsnames]{xcolor}
\usepackage{tikz}
\usetikzlibrary{arrows,matrix,positioning,fit}
\usepackage{multicol}

\usepackage[nameinlink]{cleveref}

\numberwithin{equation}{section}

\newtheorem*{introthm*}{Theorem}
\newtheorem{theorem}{Theorem}[section]
\newtheorem{lemma}[theorem]{Lemma}
\newtheorem{proposition}[theorem]{Proposition}
\newtheorem{corollary}[theorem]{Corollary}
\newtheorem*{claim*}{Claim}

\theoremstyle{definition}
\newtheorem{definition}[theorem]{Definition}
\newtheorem{construction}[theorem]{Construction}

\newtheorem{remark}[theorem]{Remark}
\newtheorem{example}[theorem]{Example}
\newtheorem*{acknowledgement}{Acknowledgements}

\DeclareMathOperator{\gr}{gr}

\newcommand{\NN}{{\mathbb N}}
\newcommand{\QQ}{{\mathbb Q}}
\newcommand{\RR}{{\mathbb R}}
\newcommand{\ZZ}{{\mathbb Z}}

\def\F{{\mathcal F}}

\def\ba{{\bf w}}
\def\c{{\bf c}}
\def\x{{\bf x}}
\def\z{{\bf t}}
\newcommand{\kk}{{\mathbbm k}}
\def\mm{{\mathfrak m}}
\DeclareMathOperator{\gap}{gap}
\DeclareMathOperator{\HS}{HS}

\DeclareMathOperator{\lcm}{lcm}

\title{Weighted Veronese Rings via Convex Semigroups}

\author[Chase]{Bek Chase}
\address{University of Central Arkansas, Department of Mathematics, 201 Donaghey Ave, Conway, AR 72035, USA}
\email{rchase@uca.edu}

\author[Fiorindo]{Luca Fiorindo} 
\address{Università degli Studi di Genova, Dipartimento di Matematica, Dipartimento di Eccellenza 2023-2027, Via Dodecaneso 35, 16146 Genova, Italy}
\email{luca.fiorindo@edu.unige.it}
\urladdr{\url{https://orcid.org/0000-0002-6435-0128}}

\author[Holleben]{Thiago Holleben}
\address{Dalhousie University, Department of Mathematics \& Statistics,
6297 Castine Way,
PO BOX 15000,
Halifax, NS,
Canada B3H 4R2}
\email{hollebenthiago@dal.ca}

\author[Marangone]{Emanuela Marangone}
\address{University of Manitoba, Department of Mathematics, 313 St. Paul's College, 70 Dysart Road, Winnipeg, MB R3T2M8 \newline
\indent PIMS - Pacific Institute for the Mathematical Sciences}
\email{emanuela.marangone@umanitoba.ca}

\author[Nguy$\tilde{\text{\^E}}$n]{Th\'ai Th\`anh Nguy$\tilde{\text{\^E}}$n}
\address{University of Dayton, Department of Mathematics and Statistics,
	300 College Park, Dayton, OH 45469, USA \newline
\indent University of Education, Hue University, 34 Le Loi St., Hue, Viet Nam}
\email{tnguyen5@udayton.edu}

\author[Seceleanu]{Alexandra Seceleanu}
\address{University of Nebraska-Lincoln, Department of Mathematics, 203 Avery Hall, Lincoln NE 68588, USA}
\email{aseceleanu@unl.edu}

\author[Singh]{Srishti Singh}
\address{University of Missouri-Columbia, Mathematics Department, 810 East Rollins Street, Columbia MO 65211, USA}
\email{spkdq@umsystem.edu}

\begin{document}

\begin{abstract}
    We determine properties of two dimensional normal affine semigroup rings, and in particular of weighted Veronese rings, including determinantal presentation, Gr\"obner basis, graded Hilbert series and graded Betti numbers, the structure of their associated graded rings, and their Koszul property. We give examples in higher dimensions illustrating that the first and last properties may fail. Our approach leverages convex monomial ideals as introduced in \cite{ConvexIdeals}, which give rise to convex semigroups.
\end{abstract}

\maketitle


\section*{Introduction}

Let $R=\bigoplus_{i\geq 0} R_i$ be an $\mathbb{N}$-graded ring. For a fixed integer $d\geq 1$,
the \emph{$d$-th Veronese subring} of $R$ is defined as
\[
V^{\langle d\rangle}(R)=\bigoplus_{i\geq 0} R_{di}.
\]
When $R$ is a standard graded polynomial ring over a field $\kk$, that is,
$R=\kk[x_1,\ldots,x_n]$ with $\deg(x_i)=1$, we denote the corresponding Veronese ring by
$V_{n,d}$. 

Since the studies of Giuseppe Veronese (1854–1917), Veronese varieties have played, and continue to play, a central role in algebraic geometry, invariant theory, and representation theory.
In geometry the $d$-th Veronese variety in $\mathbb{P}^n$ is realized as the image of the complete linear system
$$v_{n+1,d}:\mathbb{P}^n\xrightarrow{|\mathcal{O}(d)|}\mathbb{P}^N,$$
where $\mathcal{O}$ is the structure sheaf of $\mathbb{P}^n$. Its coordinate ring is isomorphic to $V_{n+1,d}$. We shall describe shortly how the algebraic definition stated above also arises naturally from invariant theory and the study of semigroup rings.

The standard graded Veronese rings $V_{n,d}$ enjoy a rich collection of algebraic
properties. They are generated as $\kk$-algebras by the monomials of degree $d$, and admit
presentations of the form
\[
V_{n,d}\cong \kk[t_1,\ldots,t_N]/I_{n,d},
\]
where the variables $t_i$ correspond to degree-$d$ monomials in $R$. The defining ideal
$I_{n,d}$ is quadratic, binomial, and determinantal, generated by the $2\times2$ minors of
a suitable matrix \cite[Exercise~6.10]{Eis}. Moreover, these minors form a Gr\"obner basis
\cite{CDR}, from which it follows that $V_{n,d}$ is Koszul \cite{Anick}. The rings $V_{n,d}$
are also normal and hence Cohen--Macaulay \cite{Hoc}. In the case $n=2$, the minimal free
resolution of $V_{n,d}$ is given by the Eagon--Northcott complex and is therefore
$2$-linear. For $n\geq 3$, the structure of minimal free resolutions is significantly more
subtle and is closely related to the Green--Lazarsfeld property $N_p$, whose full
classification remains open.

In this paper we replace the standard grading by a non-standard one. Specifically, we
consider a polynomial ring $R=\kk[x_1,\ldots,x_n]$ equipped with a positive grading
$\deg(x_i)=w_i$, and write $V_{\ba,d}$ for the $d$-th Veronese subring, where $\ba=(w_1,\ldots,w_n)$. 
In this more general
setting, it is natural to ask to what extent the structural properties of the standard
Veronese rings persist.

When $\kk$ contains a primitive $d$-th root $\xi$ of one, $V_{\ba,d}$ can be realized from an invariant-theoretic perspective as the ring of invariants of $R$ under the natural action of the cyclic group  of $d$-th roots of unity acting by graded scaling $x_i\mapsto \xi^{w_i}x_i$. Therefore the affine spectrum of a weighted Veronese ring realizes a cyclic quotient singularity.
The rings $V_{\ba,d}$ were studied as rings of invariants by Harris and Wehlau in \cite{HW} for $n=2$ and for $n=3$ under the additional assumption that $d$ divides $\sum_{i=1}^3 w_i$. They described minimal generators and relations for these rings of invariants, proved that the lead terms of the relations are quadratic, and  described the graded Betti numbers for their minimal free resolution over $R$ (see also \cite{CHW}).

In this paper we revisit two-dimensional weighted Veronese rings from a different perspective, focusing on identifying their {associated graded rings. Our approach is structural: we introduce the class of {\em convex semigroups}, cf.~\Cref{def: convex semigroup ring}, motivated by convex monomial ideals studied by Herzog-Qureshi-Mohammadi in \cite{ConvexIdeals}. In our terminology, a convex semigroup is a two-dimensional semigroup whose generators, viewed as lattice points, form a convex sequence, cf.~\Cref{def:convex}.   
We identify the associated graded rings of convex semigroups as  special fiber rings of convex monomial ideals. This new connection is the main tool that we develop and leverage in this paper. Utilizing the degeneration of a convex semigroup ring to  its associated graded ring, we show in \Cref{thm: convex semigroup} that convex semigroup rings admit determinantal
presentations, are normal and Cohen--Macaulay, and their associated graded rings are Koszul.

This result admits both applications and extensions:  the main application developed in the current work is to two-dimensional Veronese rings, which we show are convex semigroups in \Cref{lem.ConvexGens}. However, since all two-dimensional normal affine semigroup rings are  isomorphic to cyclic quotient singularities cf.~\Cref{prop: 2Dnormal_is_Veronese}, many of our results on convex semigroups and weighted Veronese rings are applicable to all two-dimensional normal semigroup rings. 

We now state our main result, \Cref{thm: 2-dimVeronese}.  For the particular case of weighted Veronese rings, items (2), (3) have been established  using different methods in \cite{HW}.

\begin{introthm*}\label{introthm}
Let  $S$ be a two-dimensional normal affine semigroup ring.
Then
\begin{enumerate}
\item $S$ has a determinantal presentation $S=\kk[t_1,\ldots, t_s]/J$,
where $J$ is the ideal of $2\times 2$ minors of a matrix  whose entries are nonconstant monomials in the variables $t_1, \ldots, t_s$;
\item  the minimal generators of $J$ form a Gr\"obner basis under the lexicographic order induced by $t_1>t_2>\cdots>t_n$;
\item the total Betti numbers of $S$ over $\kk[t_1,\ldots, t_s]$ depend only on the number $s$ of minimal  monomial $\kk$-algebra generators of $S$
\[
\beta_i^R(S)= i\binom{s-1}{i+1};
\]
\item $S$ is a normal, Cohen--Macaulay, Koszul algebra (i.e., $\gr_\mm S$ is Koszul).
\end{enumerate}
If additionally $S$ is a convex semigroup ring (in particular if $S=V_{\ba,d}$) and $I=(x^{a_i}y^{b_i} : 1\leq i\leq s)$ is the ideal generated by the minimal monomial $\kk$-algebra generators of $S$ then 
\begin{enumerate}
   \item[(5)] the associated graded ring $\gr_\mm S$ is isomorphic to the special fiber ring $\F(I)$;
\item[(6)] the graded Betti numbers expressed using the notation in \eqref{eq: gap} are
\[
\beta_{i,j}(S_\c)=\sum_{\substack{A\subseteq [s], |A|=i, \\\sum_{\ell\in A} a_\ell+b_\ell=j}} \gap(A);
\]
\item[(7)] the Hilbert series of $S$ depends only on the sequence $(d_i=a_i+b_i)_{i=1}^s$  and is given by 
\[\HS(t,S)
=
\frac{1+\sum_{A\subseteq [s]}(-1)^{|A|+1}\gap(A)\,t^{\sum_{k\in A}d_k}}
{\prod_{j=1}^s(1-t^{d_j})}.
\]
\end{enumerate}
\end{introthm*}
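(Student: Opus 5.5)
The plan is to reduce everything to the convex case and then exploit the degeneration of a convex semigroup ring to the special fiber ring of a convex monomial ideal. First, by \Cref{prop: 2Dnormal_is_Veronese}, a two-dimensional normal affine semigroup ring $S$ is isomorphic to a cyclic quotient singularity, hence to a weighted Veronese ring $V_{\ba,d}$ with two variables. By \Cref{lem.ConvexGens}, $V_{\ba,d}$ is a convex semigroup ring. Properties (1)–(4) are invariant under isomorphism of graded monomial algebras with the same minimal generators. We may therefore assume throughout that $S$ is convex, with minimal generators $x^{a_i}y^{b_i}$ ordered so that $a_1<a_2<\cdots<a_s$ and $b_1>\cdots>b_s$, forming a convex sequence.

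Next, I would invoke \Cref{thm: convex semigroup}. It provides the determinantal presentation for (1): $J$ is the ideal of $2\times2$ minors of a $2\times(s-1)$-type matrix. The natural candidate has entries $t_i$ in the top row and $t_{i+1}$ in the bottom row, twisted by monomials encoding the convexity relations $t_{i-1}t_{i+1}=t_i^{k_i}$. The same theorem gives normality, Cohen--Macaulayness, and Koszulness of $\gr_\mm S$, which is (4) and (5). For (2), I would check the Gröbner basis claim with Buchberger's criterion on pairs of minors. Alternatively, I would argue via flat degeneration: $\operatorname{in}_{\mathrm{lex}}(J)$ and $J$ have the same Hilbert function, and the initial terms of the minors already generate a squarefree-like ideal with the correct Hilbert function, computed from the Eagon--Northcott complex. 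I expect this Hilbert function comparison to be the main technical obstacle. The intended route is to show that the leading terms are the products $t_it_j$ with $j\ge i+2$, adjusted by the entry exponents, and to count standard monomials against $\dim_\kk S_{\c}$.

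For (3), $J$ is the ideal of maximal minors of a $2\times(s-1)$ matrix of the expected codimension $s-2$, because $\dim S=2$. The Eagon--Northcott complex is therefore a minimal free resolution; minimality holds because the entries are nonconstant. Its ranks are $\beta_i=i\binom{s-1}{i+1}$. For (6), I would grade the Eagon--Northcott complex by $\ZZ^2$ (equivalently by $\c$-degree, using $\deg t_\ell=(a_\ell,b_\ell)$). I would then identify each basis element of the $i$-th module with a pair consisting of a subset $A$ of columns and a choice among the $\gap(A)$ admissible symmetric-power indices, so that its degree is $\sum_{\ell\in A}(a_\ell+b_\ell)$. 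Summing over $A$ gives the formula. Finally, (7) follows formally from (6) by taking the alternating sum of graded Betti numbers in the $\ZZ$-grading $\deg t_j=d_j$ and dividing by $\prod_j(1-t^{d_j})$. This shows in passing that the Hilbert series depends only on $(d_i)$.
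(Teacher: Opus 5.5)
Your reduction to the convex case and your appeal to Theorem~\ref{thm: convex semigroup} for (1), (4) and (5) match the paper. The rest of your argument, however, rests on a claim that is false in general: that $J$ is the ideal of maximal minors of a $2\times(s-1)$ matrix with nonconstant entries. Your proof of (3) uses it via Eagon--Northcott, and (6), (7) and the Hilbert-function step of (2) inherit it. Theorem~\ref{thm: convex semigroup} gives no such matrix. Its matrix is the filled skeleton matrix $A_\c(\z)$ of Construction~\ref{const: Ac}, in which consecutive nondegenerate blocks $M_k$ overlap in a single entry. So it has at least three rows whenever two consecutive segments of the chain each carry at least three generators.

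No other choice of matrix repairs this. Take $V_{(1,3),8}=\kk[x^8,x^5y,x^2y^2,xy^5,y^8]$, so $s=5$, whose ideal is $I_2$ of
\[
\begin{pmatrix} t_1 & t_2 & t_3^2\\ t_2 & t_3 & t_4\\ t_3^2 & t_4 & t_5\end{pmatrix}.
\]
Suppose $J=I_2(N)$ for a $2\times 4$ matrix $N$ with entries in $m_Q$. Since $J$ needs $\binom{4}{2}=6$ generators, the six minors are minimal generators. Hence their quadratic parts, which are the minors of the linear part $N_1$, form a basis of $(J^*)_2$. It follows that $J^*=L=(t_1t_3-t_2^2,\,t_3t_5-t_4^2,\,t_1t_4,\,t_1t_5,\,t_2t_4,\,t_2t_5)$ equals $I_2(N_1)$. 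But $V(L)\subset\mathbb P^4$ is two conics meeting in a point. If the rows of $N_1$ are $Pt$ and $Qt$, its rank-one locus is $\bigcup_{[\lambda:\mu]\in\mathbb P^1}\mathbb P(\ker(\lambda P+\mu Q))$. Every kernel is nonempty and $V(L)$ contains no line, so each kernel is a single point. The locus is then the image of a morphism $\mathbb P^1\to\mathbb P^4$, hence irreducible, which is a contradiction. Thus $\beta_i=i\binom{s-1}{i+1}$ agrees with Eagon--Northcott only numerically. Your bijection in (6) therefore has nothing to stand on; it also silently identifies subsets of the $s-1$ columns with subsets $A\subseteq[s]$.

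The paper never resolves $J$ directly; it degenerates instead. Closure under monomial division gives $\gr_\mm S_\c\cong\F(I)$, whose lex initial ideal is $(t_it_j : |i-j|>1)$, the Stanley--Reisner ideal of a path. That ideal has a linear resolution by Fr\"oberg, with Betti numbers $i\binom{s-1}{i+1}$ by Herzog--K\"uhl, and no consecutive cancellations can occur in passing to $\F(I)$. Since $J^*$ then has a linear resolution, the Herzog--Iyengar result transfers these Betti numbers to $J$, which gives (3). For (6) and (7), the linear part of the minimal resolution of $S_\c$ is a minimal resolution of $\gr_\mm S_\c$ with the same nonstandard twists. One can therefore compute on the path ideal, where Hochster's formula produces exactly the gap numbers. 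For (2), the leading terms of the minors of $A_\c(\z)$ with variables on the diagonal are the $t_it_j$ with $|i-j|>1$, and the needed Hilbert-function equality comes from these computations. Finally, to invoke Theorem~\ref{thm: convex semigroup} at all you must verify its hypothesis that $V_{\ba,d}$ is closed under monomial division, which your reduction omits. It holds because a quotient of monomials whose degrees are divisible by $d$ again has degree divisible by $d$.
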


Determinantal equations for classes of algebraic varieties have been produced under the assumption of a sufficiently ample embedding by Mumford \cite{Mumford} and Sidman--Smith \cite{SidmanSmith} among others. By contrast, item (1) in the theorem above provides determinantal equations for two-dimensional normal affine toric varieties without additional assumptions on the embedding. Moreover, item (5) can be interpreted as a degeneration of such a variety to a union of rational normal curves and projective lines in view of \Cref{thm: convex fiber ring}~(1). Finally, item (4) confirms in a stronger form a conjecture of Davis--Erman--Martinova
\cite[Conjecture~1.5]{DavisMartinova} regarding the Koszul property of weighted Veronese rings in the two dimensional case. In this paper we employ the terminology in \cite{HRW} to say that a not necessarily standard graded ring is Koszul provided its associated graded ring with respect to the homogeneous maximal ideal is Koszul in the usual sense.

Beyond dimension two, the behavior changes markedly. We show in \Cref{prop: V_{(3,4,5),6}} that higher-dimensional
weighted Veronese rings need not admit determinantal presentations and in \Cref{prop: nonKoszul} that their associated graded rings may fail to be
Koszul. Explicit families of examples illustrate both phenomena. Though, we note in \Cref{rem: normal higher dim} that higher-dimensional weighted Veronese rings are still normal and hence Cohen-Macaulay.

The paper is organized as follows. 
In \Cref{s: convex ideals} we review the theory of convex monomial ideals and we prove new formulas \Cref{prop: betti fiber} and \Cref{prop:closedformulamonomial} for the invariants of their fiber rings with respect to a weighted grading. These constitute the basis for items (3), (6) and (7) of our main theorem. 
In~\Cref{s: convex semigroups}, we develop the theory of convex semigroups, which we used  in~\Cref{s: 2dim}  to analyze two-dimensional weighted Veronese rings, culminating in establishing the properties listed above.  Finally, in~\Cref{s: higher dimensional Veronese rings}, we exhibit examples showing the
failure of determinantal and Koszul properties in higher dimensions.

\begin{acknowledgement}
This work began at the Apprenticeship Workshop of the 2025 Fields Institute semester program
in Commutative Algebra and its Applications. We warmly thank the organizers of the workshop
and program for bringing us together, and the Fields Institute for supporting our
participation. Fiorindo is a member of INdAM--GNSAGA and gratefully acknowledges its
support. Marangone acknowledges support from the Pacific Institute for the
Mathematical Sciences. Nguy$\tilde{\text{\^e}}$n is partially supported by the NAFOSTED under the grant number 101.04-2023.07 and the AMS-Simons Travel Grant. Seceleanu is partially supported by NSF DMS--2401482. 

We thank
Daniel Erman and Maya Banks for insightful comments,
David Wehlau for bringing the works \cite{HW, CHW} to our attention, 
and Dennis Belotserkovskiy for suggesting~\Cref{prop: nonKoszul}~(2).
\end{acknowledgement}

\section{Background}

\subsection{Convex ideals}\label{s: convex ideals}

We recall the notion of convex ideals introduced in \cite{ConvexIdeals}.

\begin{definition}\label[definition]{def:convex}
A sequence $\{c_i=\{(a_i,b_i)\}_{i=1}\}^s$ of integer vectors in $\NN^2$, ordered so that
$
a_1\le a_2\le \cdots \le a_s,
$
is called \emph{convex} provided that for each $2\le i\le s-1$ the inequality
\begin{equation}\label{eq: convex}
   2c_i\le c_{i-1}+c_{i+1} 
\end{equation}
holds componentwise. We call $c_i$ a \emph{corner point} of the convex sequence if $i=1$, $i=s$, or if  strict inequality holds in \eqref{eq: convex}.
A useful property of convex sequences, proved in~\cite[Lemma~1.1(b)]{ConvexIdeals}, is that for all indices $i\le j$ and all integers $k\ge 0$ such that the expressions are defined, one has
\begin{equation}\label{eq: convex property}
c_i + c_j \le c_{i-k} + c_{j+k}.
\end{equation}
and the inequality in~\eqref{eq: convex property} is strict whenever there exists at least one corner point of the sequence with index strictly between $i-k$ and $j+k$.

In~\cite{ConvexIdeals}, a monomial ideal $I\subset \kk[x,y]$ is called a \emph{convex ideal} if 
$
I=\left(x^{a_i}y^{b_i}: 1\leq i\leq s \right),
$
where $c_i=\{(a_i,b_i)\}_{i=1}^s$ is a convex sequence. Whenever $c_i$ is a corner point of this sequence, we call  $c_i$ is a corner point of $I$.
\end{definition}

A remarkable theorem concerning the fiber cone of a convex monomial ideal is proved in~\cite{ConvexIdeals}. If $I$ is a homogeneous ideal of a standard graded ring $Q$ with maximal homogeneous ideal $\mathfrak m_Q$, the \emph{fiber cone} of $I$ is the standard graded ring
\begin{equation}\label{eq: fiber cone}
\mathcal F(I)=\bigoplus_{i\ge 0} \frac{I^i}{\mathfrak m_Q I^i}.
\end{equation}
If $I=(u_1, \ldots, u_s)$, then $\mathcal F(I)$ admits a natural surjection
\begin{equation}\label{eq: phi}
\varphi: \kk[t_1, \ldots, t_s]\to \mathcal F(I), \qquad \varphi(t_i)=\overline{u_i}\in I/\mathfrak m_Q I.
\end{equation}
When $I$ is a monomial ideal that is $\NN^n$--graded, the fiber cone inherits an $\NN^n$--grading, and hence admits a natural $\NN\times \NN^n$--grading.

\begin{theorem}[{\cite[Theorem~2.10 and Proposition~2.7]{ConvexIdeals}}]\label{thm: convex fiber ring}
Let $I$ be a convex ideal with minimal monomial generating set $I =(m_1, \ldots, m_s)$, and let $\mathcal F(I)=\kk [t_1, \ldots, t_s]/L$ be its fiber cone. Assume that ${c_{j_1},\ldots, c_{j_\ell} }$, with
$
1 = j_1 < j_2 < \cdots < j_\ell = s,
$
is the set of corner points of $I$.

For $1\le k<\ell$, consider the $2\times (j_{k+1}-j_k)$ matrix
\begin{equation}\label{eq: matrix M}
M_k=\begin{bmatrix}
t_{j_k} & t_{j_k+1} & \cdots &t_{j_{k+1}-1}\\
t_{j_k+1} & t_{j_k+2} & \cdots & t_{j_{k+1}}
\end{bmatrix},
\end{equation}
and let $\mathcal B$ be the union of the sets of all $2\times2$ minors of the matrices $M_k$.

Let $<$ denote the lexicographic order induced by $t_1>\cdots>t_s$, and let $\mathcal M$ be the set of monomials $t_i t_j$ such that
$
1 \le i < j-1 \le s-1
$
and $t_i t_j$ is not the initial monomial of any element of $\mathcal B$ with respect to $<$. Then:
\begin{enumerate}
\item $L=(\mathcal B)+(\mathcal M)$ and
\[
\operatorname{in}_<(L)=(t_i t_j \mid 1 \le i < j-1 \le s-1).
\]
Moreover, the minimal generators of $L$ form a Gr\"{o}bner basis with respect to $<$.
\item The fiber cone $\mathcal F(I)$ is a reduced Cohen--Macaulay Koszul algebra.
\item The Hilbert series of $\mathcal F(I)$ is
\[
H_{\mathcal F(I)}(t)=\frac{1 + (s-2)t}{(1-t)^2}.
\]
\end{enumerate}
\end{theorem}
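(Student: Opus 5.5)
We will prove the theorem by computing the fiber cone $\mathcal F(I)$ combinatorially, exploiting that $I$ is a monomial ideal in two variables. Since $I$ is minimally generated by the monomials $m_i=x^{a_i}y^{b_i}$, a monomial basis of $I^k/\mathfrak m_Q I^k$ consists of those products $m_{i_1}\cdots m_{i_k}$ that are minimal generators of $I^k$. These are the products not divisible by a proper multiple of another product of $k$ generators. Hence $\mathcal F(I)$ is the quotient of $\kk[t_1,\dots,t_s]$ by a binomial-plus-monomial ideal $L$, generated by:
\begin{itemize}
\item binomials $t^\alpha-t^\beta$, whenever $\prod m^\alpha=\prod m^\beta$;
\item monomials $t^\alpha$, whenever $\prod m^\alpha\in\mathfrak m_Q I^k$.
\end{itemize}
The first step is to check that $\mathcal B\cup\mathcal M\subset L$.
\begin{itemize}
\item \emph{Binomials.} Between consecutive corner points $j_k,j_{k+1}$ equality holds in \eqref{eq: convex}. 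So the $c_i$ for $j_k\le i\le j_{k+1}$ form an arithmetic progression, and $c_i+c_{j+1}=c_{i+1}+c_j$ there. Thus the minors of $M_k$ lie in $L$.
\item \emph{Monomials.} For $t_it_j\in\mathcal M$ with $i<j-1$, we use \eqref{eq: convex property} in the form $c_{i+1}+c_{j-1}\le c_i+c_j$. Strictness happens exactly when a corner point lies strictly between $i$ and $j$. In that case $m_im_j$ is a proper multiple of $m_{i+1}m_{j-1}$, so $t_it_j\in L$. If no corner lies strictly between $i$ and $j$, then $i,j$ lie in one block $[j_k,j_{k+1}]$. There $t_it_j$ is the lex initial term of the minor $t_it_j-t_{i+1}t_{j-1}$ of $M_k$, so it is excluded from $\mathcal M$.
\end{itemize}
Consequently $J:=(\mathcal B)+(\mathcal M)\subseteq L$ and $\operatorname{in}_<(J)\supseteq(t_it_j: i<j-1)=:N$.

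The second step, which is the heart, is the reverse containment via a Hilbert function count. The standard monomials of $N$ in degree $k$ are $t_i^{p}t_{i+1}^{k-p}$, giving $s+(s-1)(k-1)$ monomials. Therefore $\dim_\kk(\kk[t]/J)_k\le 1+k(s-1)$. It then suffices to show $\dim_\kk \mathcal F(I)_k\ge 1+k(s-1)$, i.e.\ that $I^k$ has at least $1+k(s-1)$ minimal generators. For this we show that the monomials $m_i^pm_{i+1}^{k-p}$ are pairwise distinct and all minimal in $I^k$:
\begin{itemize}
\item \emph{Distinctness.} Their exponent vectors $pc_i+(k-p)c_{i+1}$ have strictly increasing first coordinate, using strict increase of the $a_i$ by minimality of the generators of $I$.
\item \emph{Minimality.} Suppose a product $\prod c_{\ell_r}$ of $k$ generators is $\le pc_i+(k-p)c_{i+1}$ componentwise. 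Compare via the convex (piecewise linear) function $\phi$ interpolating the points $c_\ell$. Any sum of $k$ points lies on or above $k\phi$ at its average first coordinate, while our point lies on $k\phi$. Since $\phi$ is strictly decreasing (the $b_i$ decrease), dominance forces equality. This is the step I expect to be the main obstacle: one must handle the boundary cases carefully and make the convexity/monotonicity argument rigorous.
\end{itemize}
Once this holds, $J=L$ and $\operatorname{in}_<(L)=N$. That the minimal generators form a Gröbner basis follows since their lead terms generate $N$ (each generator $t_it_j$ of $N$ is the lead term of exactly one element of $\mathcal B\cup\mathcal M$), proving (1).

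For (2) and (3), $N$ is a squarefree quadratic monomial ideal, so $\mathcal F(I)$ is Koszul by a quadratic Gröbner basis, and it is reduced because its initial ideal is radical. Cohen--Macaulayness transfers from $\kk[t]/N$. That ring is the Stanley--Reisner ring of a path on $s$ vertices, a one-dimensional connected complex, hence Cohen--Macaulay; Cohen--Macaulayness then passes to $\mathcal F(I)$ by upper semicontinuity of Gröbner degeneration. Finally, the Hilbert function $1+k(s-1)$ sums to $\frac{1+(s-2)t}{(1-t)^2}$, giving (3).
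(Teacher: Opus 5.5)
Your proof is correct. The paper gives no argument of its own for this statement: it is quoted from \cite{ConvexIdeals}, so the comparison is between your self-contained proof and a citation.

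Your route works as follows.
\begin{enumerate}
\item Componentwise convexity, via \eqref{eq: convex property}, puts $\mathcal B\cup\mathcal M$ inside $L$. Hence the initial ideal of $(\mathcal B)+(\mathcal M)$ contains the path ideal $N$, and $\dim_\kk \F(I)_k\le 1+k(s-1)$.
\item The matching lower bound comes from exhibiting the $1+k(s-1)$ distinct monomials $m_i^pm_{i+1}^{k-p}$ as minimal generators of $I^k$.
\end{enumerate}

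The step you flag as the main obstacle is already essentially complete once two points are written down.
\begin{enumerate}
\item $\phi$ is convex. By \eqref{eq: convex}, the increments $a_{i+1}-a_i>0$ and $b_{i+1}-b_i<0$ are both nondecreasing in $i$. Hence the slopes $(b_{i+1}-b_i)/(a_{i+1}-a_i)$ are nondecreasing.
\item Suppose $Q=\sum_r c_{\ell_r}\le P=pc_i+(k-p)c_{i+1}$ componentwise. Then $Q_2\ge k\phi(Q_1/k)\ge k\phi(P_1/k)=P_2$. This forces $Q_2=P_2$, and then $Q_1=P_1$ by strict monotonicity of $\phi$. There is no boundary issue, since $Q_1/k\in[a_1,a_s]$.
\end{enumerate}

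The two bounds use convexity differently. The lower bound only needs convexity of the interpolant $\phi$. The upper bound genuinely needs the componentwise condition. For example, take the collinear but unevenly spaced points $(0,10),(4,2),(5,0)$. There $m_1m_3$ is a minimal generator of $I^2$, and $I^2$ has $6>5$ minimal generators.

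What your approach buys over the citation:
\begin{enumerate}
\item an explicit list of the minimal generators of every power $I^k$;
\item a direct identification of $\operatorname{in}_<(L)$ with the Stanley--Reisner ideal of a path, which is exactly what the paper uses afterwards in \Cref{prop: betti fiber} and in the proof of \Cref{thm: convex semigroup}.
\end{enumerate}
Your deductions of reducedness, Koszulness and Cohen--Macaulayness from this squarefree quadratic initial ideal are the standard ones and are correctly invoked. Your remark that each $t_it_j$ is the lead term of exactly one element of $\mathcal B\cup\mathcal M$ also correctly shows that this set is a minimal generating set, since $\dim_\kk L_2=\binom{s-1}{2}$.
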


We now add an additional homological property of $\mathcal F(I)$ to the list above.

\begin{proposition}\label[proposition]{prop: betti fiber}
Under the hypotheses of~\Cref{thm: convex fiber ring}, the graded Betti numbers of the fiber cone satisfy
\[
\beta_{0,0}(\mathcal F(I))=1,
\]
and for $i\ge 1$,
\begin{equation}\label{eq: betti}
\beta_{i,i+1}(\mathcal F(I))=i\binom{s-1}{i+1}, \qquad \beta_{i,j}(\mathcal F(I))=0 \text{ for } j\neq i+1.
\end{equation}
\end{proposition}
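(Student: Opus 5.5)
We work with the standard grading on $\kk[t_1,\ldots,t_s]$ given by $\deg t_i=1$. By \Cref{thm: convex fiber ring}(1), the initial ideal of $L$ with respect to lex is
\[
\operatorname{in}_<(L)=(t_it_j \mid 1\le i<j-1\le s-1).
\]
This is a quadratic squarefree monomial ideal. It is the edge ideal of the complement of the path $P_s$ on vertices $1,\ldots,s$, whose edges are $\{i,i+1\}$.

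The plan has three steps.

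\textbf{Step 1: the initial ideal has a linear resolution.} The complement of a chordal graph has a linear resolution by Fröberg's theorem. The path $P_s$ is chordal, so $\operatorname{in}_<(L)$ has a $2$-linear resolution.

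\textbf{Step 2: transfer to $L$.} By upper semicontinuity,
\[
\beta_{i,j}(\kk[t]/L)\le \beta_{i,j}(\kk[t]/\operatorname{in}_<(L)).
\]
Hence $\beta_{i,j}(\mathcal F(I))=0$ for $j\ne i+1$ when $i\ge1$, and clearly $\beta_{0,0}=1$. The two quotients also have the same Hilbert series. Once both resolutions are known to be linear, the Betti numbers are determined by the Hilbert series, since $\sum_i(-1)^i\beta_{i,i+1}t^{i+1}$ equals $\HS\cdot(1-t)^s$ with no cancellation across degrees. So the Betti numbers of $\mathcal F(I)$ and of $\kk[t]/\operatorname{in}_<(L)$ agree.

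\textbf{Step 3: compute the numbers.} By \Cref{thm: convex fiber ring}(3), the Hilbert series is $(1+(s-2)t)/(1-t)^2$. Multiply the numerator $1+(s-2)t$ by $(1-t)^{s-2}$ and read off the coefficients. The coefficient of $t^{i+1}$ is
\[
(-1)^{i+1}\left[\binom{s-2}{i+1}-(s-2)\binom{s-2}{i}\right],
\]
so
\[
\beta_{i,i+1}=(s-2)\binom{s-2}{i}-\binom{s-2}{i+1}.
\]
It remains to check that this equals $i\binom{s-1}{i+1}$.

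Write $\binom{s-2}{i}=\frac{i+1}{s-1}\binom{s-1}{i+1}$ and $\binom{s-2}{i+1}=\frac{s-i-2}{s-1}\binom{s-1}{i+1}$. Then
\[
\beta_{i,i+1}=\binom{s-1}{i+1}\cdot\frac{(s-2)(i+1)-(s-i-2)}{s-1}.
\]
The numerator is
\[
(s-2)(i+1)-(s-i-2)=si+s-2i-2-s+i+2=si-i=i(s-1).
\]
Therefore $\beta_{i,i+1}=i\binom{s-1}{i+1}$, as claimed.

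\textbf{Alternative for Step 1.} If one prefers to avoid Fröberg's theorem, one can instead invoke Koszulness from \Cref{thm: convex fiber ring}(2) together with the Cohen–Macaulay property. A Cohen–Macaulay Koszul algebra of dimension $2$ with $h$-polynomial $1+(s-2)t$ has multiplicity $s-1$ and codimension $s-2$. This is minimal multiplicity, since the multiplicity equals codimension plus one, and minimal multiplicity forces a $2$-linear resolution.

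\textbf{Main obstacle.} The main obstacle is justifying the linearity cleanly. I would use Fröberg's theorem via chordality of $P_s$ as the primary argument, with the minimal-multiplicity argument as a backup. The rest of the proof is the binomial computation above.
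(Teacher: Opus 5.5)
Your proof is correct, and its skeleton matches the paper's. Both pass to the lex initial ideal $(t_it_j \mid |i-j|\ge 2)$, both get a $2$-linear resolution from Fr\"oberg's theorem because the path $P_s$ is chordal, and both transfer the result to $L$. The differences lie in how the transfer is made and how the numbers are computed.

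The paper first invokes Reisner's criterion to see that $Q/\operatorname{in}_<(L)$ is Cohen--Macaulay. Its pure resolution is therefore governed by the Herzog--K\"uhl equations, which produce $i\binom{s-1}{i+1}$ directly. The paper then appeals to Peeva's consecutive cancellations and observes that a linear resolution admits none.

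You instead use upper semicontinuity only to conclude that $L$ itself has a linear resolution. You then read the Betti numbers off the Hilbert series $(1+(s-2)t)/(1-t)^2$ from \Cref{thm: convex fiber ring}(3). This works because, for a linear resolution, each power $t^{i+1}$ in $(1+(s-2)t)(1-t)^{s-2}$ receives a contribution from exactly one homological degree. Your binomial simplification to $i\binom{s-1}{i+1}$ checks out. Once linearity of $L$'s resolution is established, the comparison with the initial ideal's Betti numbers in Step 2 is actually redundant; Step 3 alone finishes the proof.

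What each route buys: yours is more elementary. It needs neither Reisner's criterion nor Herzog--K\"uhl, and Cohen--Macaulayness plays no role. The cost is that it relies on the known Hilbert series, which is, however, also immediate from the $f$-vector $(1,s,s-1)$ of the path. The paper's route is self-contained once the initial ideal is known and avoids any Hilbert series bookkeeping.

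Your backup argument via minimal multiplicity is also valid, and it does not actually need the Koszul property. Cohen--Macaulayness together with $h$-polynomial $1+(\text{codim})\,t$ already forces a $2$-linear resolution. It does require the usual passage to an infinite field to cut down by a linear regular sequence.
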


\begin{proof}
By~\Cref{thm: convex fiber ring}, the initial ideal of $L$ with respect to the lexicographic order is
\[
J=(t_i t_j \mid 1 \le i < j-1 \le s-1).
\]
This is the Stanley--Reisner ideal of a simplicial complex $\Delta$ whose $1$--skeleton is a path on $s$ vertices with $s-1$ edges connecting consecutive vertices. Since $\Delta$ is a connected graph, Reisner's criterion~\cite[Theorem~1]{R1976} implies that $J$ is Cohen--Macaulay. Moreover, as $\Delta$ is a tree and hence chordal, Fr\"{o}berg's theorem~\cite[Theorem~1]{F1990} implies that $J$ has a linear resolution.

Let $Q=\kk[t_1, \ldots, t_s]$ with the standard grading. Then $Q/J$ is a $2$--dimensional Cohen--Macaulay $Q$--module with a pure minimal free resolution whose shifts are given by $d_0=0$ and $d_i=i+1$ for $i\ge1$. By the Herzog--K\"{u}hl equations~\cite{HK1984} (see also~\cite[Section~1.4]{BSsurvey}), there exists $t\in\QQ$ such that
\[
\beta_{i,d_i}(Q/J)=(-1)^i t\prod_{k\ne i}\frac{1}{d_k-d_i}.
\]
To compute $t$, note that
\[
1=\beta_{0,0}(Q/J)=t\prod_{k\ne0}\frac{1}{d_k}=\frac{t}{(s-1)!},
\]
so $t=(s-1)!$. Therefore, for $i\ge1$ we obtain
\begin{align*}
\beta_{i,i+1}(Q/J)
&=(-1)^i (s-1)! \prod_{k\ne i} \frac{1}{k-i}\
&=\frac{(s-1)!}{(i+1)\prod_{0<k<i}(i-k)\prod_{i<k<s-1}(k-i)}\
&= i\binom{s-1}{i+1}.
\end{align*}

By~\cite{Peeva}, the minimal $Q$--free resolution of $\mathcal F(I)=Q/L$ is obtained from that of $Q/J$ by consecutive cancellations. Since the resolution of $Q/J$ is linear, no such cancellations can occur. Hence
\[
\beta_{i,i+1}(\mathcal F(I))=\beta_{i,i+1}(Q/J)=i\binom{s-1}{i+1},
\]
which completes the proof.
\end{proof}

In~\Cref{s: 2dim} we will be interested in viewing the monomial ideal
$
(t_i t_j \mid 1 \leq i < j-1 \leq s-1)
$
as a homogeneous ideal of a weighted polynomial ring. To this end, we determine the
$\NN^s$-graded structure of its minimal free resolution, as well as its Hilbert series
with respect to a given non-standard grading.

Let $A=\{b_1,\dots,b_i\}\subset [s]$ with $b_1<\cdots<b_i$. We define the
\emph{gap number} of $A$ by
\begin{equation}\label{eq: gap}
\gap(A)=\bigl|\{\,\ell \mid b_{\ell+1}-b_\ell>1\,\}\bigr|.
\end{equation}

\begin{proposition}\label[proposition]{prop:closedformulamonomial}
Let $Q=\kk[t_1,\ldots,t_s]$ and
$
J=(t_i t_j \mid 1 \leq i < j-1 \leq s-1).
$
Then the $(i-1)$-st free module in a minimal free resolution of $Q/J$ is
\begin{equation}\label{eq: multiBetti}
F_{i-1}
=\bigoplus_{A\subseteq [s],|A|=i} Q\!\left(-\prod_{j\in A} t_j\right)^{\gap(A)}.
\end{equation}
In particular, if $\deg(t_i)=d_i$, then the Hilbert series of $Q/J$ is given by
\begin{equation}\label{eq: HS 2dim}
\HS(t,Q/J)
=
\frac{1+\sum_{A\subseteq [s]}(-1)^{|A|+1}\gap(A)\,t^{\sum_{k\in A}d_k}}
{\prod_{j=1}^s(1-t^{d_j})}.
\end{equation}
\end{proposition}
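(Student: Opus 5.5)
Since $J$ is a squarefree monomial ideal, its minimal free resolution is $\NN^s$-graded and every multidegree is squarefree. Hochster's formula therefore gives the multigraded Betti numbers: for $A\subseteq[s]$,
\[
\beta_{i-1,A}(Q/J)=\dim_\kk \widetilde H_{|A|-i-1}(\Delta_A;\kk),
\]
where $\Delta$ is the simplicial complex with Stanley--Reisner ideal $J$ and $\Delta_A$ is its restriction to $A$. As noted in the proof of \Cref{prop: betti fiber}, $\Delta$ is the path on vertices $1,\dots,s$ with edges $\{j,j+1\}$. It has no higher faces, because $J$ contains $t_it_k$ for every non-consecutive pair, so every triple contains a non-edge.

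For $A=\{b_1<\cdots<b_i\}$, the restriction $\Delta_A$ is a disjoint union of paths, one for each maximal block of consecutive integers in $A$. The number of blocks is $\gap(A)+1$, so $\Delta_A$ is a forest with $\gap(A)+1$ connected components. Its only nonzero reduced homology is therefore $\widetilde H_0(\Delta_A)\cong\kk^{\gap(A)}$, and this holds for $A\neq\emptyset$. Hochster's formula with $|A|-i-1=0$ says that multidegree $A$ contributes only in homological degree $|A|-1$, with Betti number $\gap(A)$. This is exactly \eqref{eq: multiBetti}: $F_{i-1}$ collects the sets with $|A|=i$, each with shift $\prod_{j\in A}t_j$ and multiplicity $\gap(A)$. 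Singletons have gap $0$ and do not contribute to $F_0$ beyond the free summand $Q$ for $A=\emptyset$, and $F_0=Q$ correctly. I will treat the case $A=\emptyset$ separately: it gives $F_{-1}$ trivially, and $Q$ itself sits in homological degree $0$.

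For the Hilbert series, I specialize the multigrading by $\deg t_j=d_j$. Then $Q(-\prod_{j\in A}t_j)$ has Hilbert series $t^{\sum_{k\in A}d_k}/\prod_j(1-t^{d_j})$. The alternating sum over the resolution puts the sign $(-1)^{i-1}=(-1)^{|A|+1}$ on the summand indexed by $A$ in $F_{i-1}$. Adding the term $1$ from $F_0=Q$ yields \eqref{eq: HS 2dim}. Sets with $|A|\le1$ have $\gap(A)=0$ and so do not affect the sum.

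The only delicate point is the sign and index bookkeeping in Hochster's formula, matching reduced $\widetilde H_0$ with homological degree $|A|-1$. That check is mechanical. As a consistency check, summing over $|A|=i$ should recover the total Betti numbers of \Cref{prop: betti fiber}.
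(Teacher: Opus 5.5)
Your argument is correct and is essentially the paper's proof. Both identify $J$ as the Stanley--Reisner ideal of the path on $[s]$, apply Hochster's formula, and observe that the induced subcomplex on $A$ is a forest with $\gap(A)+1$ components, so only $\widetilde H_0$ contributes, in homological degree $|A|-1$. There is one indexing slip: your displayed formula should read $\beta_{i-1,A}(Q/J)=\dim_\kk\widetilde H_{|A|-i}(\Delta_A;\kk)$ (equivalently $\beta_{i,A}=\dim_\kk\widetilde H_{|A|-i-1}(\Delta_A;\kk)$), which is the version your conclusion actually uses. Your explicit handling of $A=\emptyset$, of singletons, and of the absence of $2$-faces is a welcome addition that the paper leaves implicit.
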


\begin{proof}
The ideal $J$ is the Stanley--Reisner ideal of the path graph $P_s$ on $s$ vertices.
Consequently, the only (nonempty) induced subcomplexes of $P_s$ with nontrivial reduced
homology are the disconnected sets. Such subcomplexes are in bijection with subsets
$A\subseteq [s]$ satisfying $\gap(A)>0$.

Let $m=\prod_{j\in A}t_j$ be a squarefree monomial of degree $|A|$. By Hochster’s formula,
\[
\beta_{|A|-1,m}(Q/J)
=\dim_\kk \widetilde H_0(P_s[A];\kk)
=\gap(A),
\]
where $P_s[A]$ denotes the induced subcomplex of $P_s$ on the vertex set $A$. This yields
the claimed description of the free modules in the minimal resolution whence formula \eqref{eq: HS 2dim} for the Hilbert series follows.
\end{proof}

\begin{example}
Let $s=4$, $Q=\kk[t_1,\dots,t_4]$ with $\deg(t_i)=d_i$, and
$
J=(t_1t_3,t_1t_4,t_2t_4).
$
The subsets $A$ of $[4]$ with nonzero gap number each having $\gap(A)=1$ are
\[
\{1,3\},\ \{1,4\},\ \{2,4\},\ \{1,2,4\},\ \{1,3,4\}.
\]
Therefore, \eqref{eq: HS 2dim} gives
\[
\HS(t,Q/J)
=
\frac{
1
- t^{d_1+d_3}
- t^{d_1+d_4}
- t^{d_2+d_4}
+ t^{d_1+d_2+d_4}
+ t^{d_1+d_3+d_4}
}{
(1-t^{d_1})(1-t^{d_2})(1-t^{d_3})(1-t^{d_4})
}.
\]
\end{example}

\section{Convex semigroup rings}\label{s: convex semigroups}

 \Cref{def:convex} formalizes the notion of a convex sequence of lattice points
$\c=\{c_i\}_{i=1}^s\subset \NN^2$. Motivated by this, we introduce the following terminology.

\begin{definition}\label[definition]{def: convex semigroup ring}
A \emph{convex semigroup} is a subsemigroup of $\NN^2$ generated by a convex sequence
$\c\subset \NN^2$. A \emph{convex semigroup ring} is the $\kk$-subalgebra of $\kk[x,y]$
generated by the monomials corresponding to $\c$, namely
\begin{equation}
\label{eq: Sc}
S_{\c}=\kk\bigl[x^{a_{i}}y^{b_{i}}\mid \c=\{(a_{i},b_{i})\}_{i=1}^s\bigr]
\subset \kk[x,y].
\end{equation}
\end{definition}

\begin{example}\label[example]{ex: convex}
The weighted Veronese ring
\[
V_{(1,29),47}
=\kk[x_1^{47},x_1^{18}x_2,x_1^7x_2^3,x_1^3x_2^8,x_1^2x_2^{21},x_1x_2^{34},x_2^{47}]
\]
is a convex semigroup ring.~\Cref{fig:veronese} illustrates the exponent vectors of its
generators. We will see in \Cref{lem.ConvexGens} that this is part of a more general phenomenon.

\begin{figure}[h]
\begin{multicols}{2}
\begin{tikzpicture}[scale=0.10]
  \draw[->] (-2,0) -- (50,0) node[right] {$x_1$};
  \draw[->] (0,-2) -- (0,50) node[above] {$x_2$};

  \foreach \x/\y in {47/0, 18/2, 7/3, 3/8, 2/21, 1/34, 0/47} {
    \fill (\x,\y) circle (10pt);
    \node[above right] at (\x,\y) {$(\x,\y)$};
  }

  \draw[thick,Fuchsia] (47,0) -- (18,2);
  \draw[thick,Emerald ] (18,2) -- (7,3);
  \draw[thick, blue] (7,3) -- (3,8);
  \draw[thick, red ] (3,8)-- (0,47);
\end{tikzpicture}

\begin{flushleft}
The defining ideal of $V_{(1,29),47}$ is generated by the $2\times 2$ minors of the following matrix. 
\end{flushleft}
 \[
   A= \begin{pNiceMatrix}
\Block[draw={Fuchsia}, rounded-corners]{1-2}{}
        t_1 &  \Block[draw={Emerald}, rounded-corners]{2-1}{} t_2 & t_3^2 & t_3t_4^2 & t_3t_4t_5 \\
        t_2^2 & \Block[draw={blue}, rounded-corners]{1-2}{}t_3 & \Block[draw={red}, rounded-corners]{2-3}{}t_4 & t_5 & t_6 \\
        t_2t_3^3t_4 & t_4^2 & t_5 & t_6 & t_7
    \end{pNiceMatrix}.
    \]
    \begin{flushleft}
    In this matrix
the variables appear in contiguous blocks marked in different colors, each corresponding to a segment of the convex
chain determined by $\c$. 
\end{flushleft}
\end{multicols}
\caption{Exponent vectors of the generators of $V_{(1,29),47}$ and corresponding matrix.}
\label{fig:veronese}
\end{figure}

\end{example}

We explain  how one can build from any convex sequence a matrix analogous to $A$ in~\Cref{const: Ac}.
A condition we need to do this is as follows.

\begin{definition}\label[definition]{def: cmd}
An affine semigroup $S\subseteq R=\kk[x_1,\dots,x_n]$ is
\emph{closed under monomial division} if whenever monomials 
$\mu_1,\mu_2\in S$ satisfy $\mu_1/\mu_2\in R$, then $\mu_1/\mu_2\in S$.
\end{definition}

\begin{lemma}\label[lemma]{lem: normal}
If a semigroup ring $S\subset \kk[x_1,\dots,x_n]$  is closed under monomial division,
then $S$ is normal.
\end{lemma}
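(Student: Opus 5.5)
We use the standard characterization of normality for affine semigroup rings. Let $C\subseteq \NN^n$ be the affine semigroup with $S=\kk[C]$, and let $G=\ZZ C$ be the group it generates. Hochster's criterion says that $S$ is normal if and only if $C$ is saturated: whenever $v\in G$ and $mv\in C$ for some integer $m\geq 1$, we have $v\in C$. (Equivalently, $C=G\cap \RR_{\ge 0}C$.) It therefore suffices to check saturation, and the hypothesis of closure under monomial division is tailored to this.

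Let $v\in G$ with $mv\in C$ for some $m\ge 1$. Since $v\in G$, we can write $v=\alpha-\beta$ with $\alpha,\beta\in C$. Translating into monomials, $\mu_1=\x^\alpha$ and $\mu_2=\x^\beta$ both lie in $S$. We now compare exponents with those of $\x^{mv}\in S\subseteq R$:
\begin{itemize}
\item every coordinate of $mv$ is nonnegative, because $mv\in C\subseteq\NN^n$;
\item hence every coordinate of $v$ is nonnegative, since $m>0$;
\item therefore $\alpha-\beta\in\NN^n$, which says exactly that $\mu_1/\mu_2=\x^{v}$ is a monomial of $R$.
\end{itemize}
Closure under monomial division (\Cref{def: cmd}) now gives $\x^v\in S$. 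Because $S$ is a monomial subalgebra, this means $v\in C$. Thus $C$ is saturated, and $S$ is normal.

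In fact the argument shows more: $C=G\cap\NN^n$. Whenever $\NN^n\cap\RR_{\ge0}C$ is contained in $\NN^n$, saturation follows at once from this equality. There is no real obstacle. The only points to take care over are writing an arbitrary group element as a difference of two semigroup elements, and remembering that an element of $S$ is a monomial exactly when its exponent vector lies in $C$. Alternatively, instead of quoting Hochster's criterion, one could verify integral closedness directly via the monomial grading: $S$ is $\ZZ^n$-graded, so its integral closure in its fraction field is spanned by monomials, and the argument above applies verbatim to such monomials.
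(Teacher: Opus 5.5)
Your proof is correct and takes essentially the same route as the paper. The paper uses the equivalent description $\overline{S}=\kk[\{x^u\mid u\in\operatorname{pos}(\c)\cap L\}]$ of the normalization where you use the saturation criterion, and then writes the lattice element as a difference of exponents of monomials of $S$ and applies closure under monomial division, exactly as you do; your version also makes explicit the nonnegativity of $v$ needed to ensure $\mu_1/\mu_2\in R$, which the paper leaves implicit.
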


\begin{proof}
Let $\c=\{c_1,\dots,c_s\}\subset \mathbb Z^n$ be the exponent vectors of the monomial generators of $S$.
Let $L$ be the lattice generated by $\c$ and define the cone
\[
\operatorname{pos}(\c)=\Bigl\{\sum_{i=1}^s r_i c_i \mid r_i\in \RR, r_i\ge 0\Bigr\}.
\]
Then the normalization of $S$ is
\[
\overline{S}=\kk[\{x^u\mid u\in \operatorname{pos}(\c)\cap L\}].
\]
If $x^u\in \overline{S}$, then $u=u^+-u^-$ with $x^{u^+},x^{u^-}\in S$. Since $S$ is closed
under monomial division, $x^u=x^{u^+}/x^{u^-}\in S$. Hence $\overline{S}=S$.
\end{proof}

\begin{remark}
The converse of \Cref{lem: normal} does not hold: for example,
$\kk[xy,x^2y,xy^2]$ is normal and convex, but $x^2y/(xy)=x\notin\kk[xy,x^2y,xy^2]$.
\end{remark}

\begin{construction}\label[construction]{const: Ac}
Let $\c=\{c_i\}_{i=1}^s\subset \NN^2$ be a convex sequence, and let
$\{c_{j_1},\ldots, c_{j_\ell}\}$ with $1=j_1<\cdots<j_\ell=s$ denote its corner points.
We describe the construction of a \emph{skeleton matrix} $A_\c$, by which we mean a matrix whose entries are either elements of $\kk[t_1,\ldots, t_s]$ or blank, obtained by overlapping the
matrix blocks $M_k$ from \eqref{eq: matrix M} along the main diagonal.
If $j_{k+1}=j_k+1$, so that two corners occur consecutively, we call $M_k$
\emph{degenerate} and allow it to be one of the matrices
\begin{equation}\label{eq: degenerate block}
M_k=\begin{bmatrix} t_{j_k} & t_{j_{k+1}} \end{bmatrix}
\qquad\text{or}\qquad
M_k=\begin{bmatrix} t_{j_k} \\ t_{j_{k+1}} \end{bmatrix}.
\end{equation}

The skeleton matrix $A_\c$ is obtained by placing the blocks
$M_1,\ldots,M_{\ell-1}$ along the diagonal, overlapping them at common entries.
Its entries are either variables $t_i$ or left blank.
If $M_k$ and $M_{k+1}$ are both non-degenerate, they overlap in the entry
$t_{j_{k+1}}$. For any maximal sequence of consecutive degenerate blocks
$M_k,\ldots,M_{k+t}$, we alternate horizontal and vertical placements:
$M_{k+2i}$ is $1\times 2$ and $M_{k+2i+1}$ is $2\times 1$. The outcome has the following shape with degenerate blocks in blue and nondegenerate blocks in red.
\[
 A_\c=
 \begin{pNiceMatrix}
\Block[draw={red}, rounded-corners]{2-3}{}t_1 & \cdots & t_{j_2-1} &  & & & & & \\
t_2  & \cdots & \Block[draw={red}, rounded-corners]{2-3}{} t_{j_2}  & \cdots &  t_{j_3-1} & & &  \\
   &  & t_{j_2+1}  & \cdots &\Block[draw={blue}, rounded-corners]{1-2}{} t_{j_3} & \Block[draw={blue}, rounded-corners]{2-1}{} t_{j_4} & &  \\
     &  &  & &   & t_{j_5} &\\
   & & &  & & &  \ddots\\
 & &  &  & &   & & \Block[draw={red}, rounded-corners]{2-3}{} t_{j_{\ell-1}}  & \cdots & t_{j_{\ell}-1}  \\
& &  &  &  &  & & t_{j_{\ell-1}+1}& \cdots & t_{j_{\ell}}
\end{pNiceMatrix}
\]
\end{construction}

Having built the skeleton matrix above, we now explain how to fill in the missing entries.

\begin{lemma}\label[lemma]{lem: fill Ac}
Assume that $S_\c$ is closed under monomial division in $\kk[x,y]$.
Then the blank entries of $A_\c$ can be filled with monomials of degree at least two
in $t_1,\ldots,t_{s-1}$, producing a matrix $A_\c(\z)$ such that all $2\times2$ minors
of $A_\c(\z)$ vanish after substituting $t_i$ by monomial algebra generators of $S_\c$.
\end{lemma}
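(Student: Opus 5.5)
We fill the blank entries so that the matrix becomes, after substitution, a matrix of monomials with rank one. The plan is to work with a rank-one model in which every entry is a ratio of generators. Fix the monomial generators $u_i=x^{a_i}y^{b_i}$ of $S_\c$. For a matrix $(\mu_{pq})$ of Laurent monomials in $x,y$, all $2\times 2$ minors vanish if and only if $\mu_{pq}=\rho_p\gamma_q$ for some Laurent monomials $\rho_p,\gamma_q$, that is, $\mu_{pq}\mu_{p'q'}=\mu_{pq'}\mu_{p'q}$.

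So we first show that the nonblank skeleton entries of $A_\c$, after substitution, are consistent with such a product structure. Then we define each blank entry by the forced value $\mu_{pq}=\rho_p\gamma_q$.

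Consistency comes from the blocks. Inside a nondegenerate block $M_k$ the corner-free stretch $j_k,\dots,j_{k+1}$ has $2c_i=c_{i-1}+c_{i+1}$, so the $c_i$ are in arithmetic progression. Hence $u_{i+1}/u_i=\delta_k$ is a fixed Laurent monomial, and the block has column ratio $\delta_k$ and row ratio $\delta_k$ along the diagonal shift. Consecutive blocks share exactly one entry (or, for degenerate blocks, share a row or a column). The nonblank pattern therefore forms a "staircase", and any $2\times2$ minor using only skeleton entries lies in a single block, where it vanishes. Along the staircase we can define the $\rho_p$ and $\gamma_q$ recursively: proceed from the top-left entry $u_1$, and each new row or column introduced by a block receives its ratio from an entry it shares with the previous one. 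No cycle occurs, since the staircase graph on rows and columns is a tree, so there is no conflict. This gives candidate values $\mu_{pq}=\rho_p\gamma_q$ for the blank positions.

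The main obstacle is showing that each blank entry $\mu_{pq}$ is a genuine monomial of $S_\c$ lying in the subalgebra generated by $u_1,\dots,u_{s-1}$ in degree at least two. Blank entries above the staircase will be shown to equal a product of two skeleton entries divided by a third, $\mu_{pq}=\mu_{pq'}\mu_{p'q}/\mu_{p'q'}$, with a diagonal-adjacent choice chosen inductively. By the convexity inequality \eqref{eq: convex property}, we expect $c_i+c_j\le c_{i-k}+c_{j+k}$ componentwise for the relevant indices. This makes the quotient an honest monomial in $\kk[x,y]$, and closure under monomial division (\Cref{def: cmd}) then puts it in $S_\c$. The symmetric argument handles entries below the staircase.

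To get degree at least two, note the following. Every entry of a row or column lying off the staircase strictly dominates a skeleton entry, by strictness of \eqref{eq: convex property} across a corner. The blank entry is therefore a proper multiple of some generator $u_m$, with a nontrivial cofactor in $S_\c$. We then write it as a product of at least two generators, and avoid $u_s$ by choosing $m$ appropriately. If this fails, we would expand using the fact that $S_\c$ is generated by the $u_i$ and that the quotient lies in the interior of the cone spanned by $c_1$ and $c_{s-1}$. We expect the fiddliest bookkeeping in this step to be the alternating degenerate blocks.
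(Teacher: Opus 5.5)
Your rank-one bookkeeping is a legitimate alternative to the paper's induction on corners. Once every entry has the form $\rho_p\gamma_q$, all $2\times 2$ minors vanish automatically, and this replaces the paper's case check of the six submatrix shapes. One small correction: the row--column incidence graph of the skeleton is not a tree, since a nondegenerate block with $n\ge 2$ columns contributes a $K_{2,n}$. Your recursion is still consistent, because each block is itself rank one and meets the union of the earlier blocks in a single entry. But this is the easy half of the lemma. Its real content is that the forced values $\rho_p\gamma_q$ are genuine monomials of $S_\c$ that factor into at least two of $u_1,\dots,u_{s-1}$. For this your proposal only offers ``will be shown'', ``we expect \dots\ for the relevant indices'', and ``if this fails, we would expand''. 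That is a genuine gap.

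Concretely, a formula $\mu_{pq}=\mu_{pq'}\mu_{p'q}/\mu_{p'q'}$ using three skeleton entries is not available in general. With four nondegenerate blocks, row $1$ and the last column admit no such triple. So you must divide by previously filled entries. Then \eqref{eq: convex property} can only be invoked if you know which generator each earlier entry is divisible by, and you never formulate such an invariant.

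The paper supplies it through its induction. Apply the lemma to $\{c_1,\dots,c_j\}$ with $j=j_{\ell-1}$: every entry $\mu\neq t_j$ in the row of $t_j$ involves only $t_1,\dots,t_{j-1}$, so $t_{i-1}$ divides $\mu$ for some $i\le j$. Then $c_i+c_j<c_{i-1}+c_{j+1}$, strict because $c_j$ is a corner, gives $u_{i-1}u_{j+1}/u_j=u_i\,g$ with $g\neq 1$ a monomial. Closure under monomial division puts $g$ in $S_\c$, so $\mu u_{j+1}/u_j=(\mu/u_{i-1})\,u_i\,g$ is a product of at least two generators. Equivalently, convexity says the steps $c_{i+1}-c_i$ are componentwise nondecreasing, and one carries along which $c_i$ each filled entry dominates.

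In particular, the restriction to $t_1,\dots,t_{s-1}$ is not something to arrange at the end. It is exactly what makes the index condition in \eqref{eq: convex property} hold at the next step. Your fallback, that the quotient lies in the interior of the cone spanned by $c_1$ and $c_{s-1}$, is unproved. Even granting it, you would still need that $c_1,\dots,c_{s-1}$ generate every lattice point of that subcone.
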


\begin{proof}
Let $
c_i=(a_i,b_i)$ and $\varphi:\kk[t_1,\ldots,t_s]\to\kk[x,y]$ be given by
$\varphi(t_i)=m_i:=x^{a_i}y^{b_i}$, and let $J=\ker(\varphi)$.
We argue by induction on the number of corner points of $\c$.

If $\c$ has only two corners, then $A_\c=M_1$ has no blank entries.
By \cite[Lemma~1.2(b)]{ConvexIdeals}, the vectors $c_i$ lie evenly spaced on
$[c_1,c_s]$, and hence all $2\times2$ minors of $M_1$ vanish after substitution.

Now suppose $\c$ has $\ell>2$ corners, with indices
$j_1<\cdots<j_\ell$, and set $\c'=\{c_i\}_{i=1}^{j_{\ell-1}}$.
By induction, there exists a filled matrix $A_{\c'}(\z)$ with
$I_2(A_{\c'}(\z))\subseteq J\cap\kk[t_1,\ldots,t_{j_{\ell-1}}]$.

We extend $A_{\c'}(\z)$ by attaching the final block $M_{\ell-1}$. Depending on its shape the resulting matrix is one of the following:
\[
 \begin{pNiceMatrix}
\Block[draw={black}, rounded-corners]{4-4}{} \Block{4-4}{A_{\c'}(\z)} &  &  &  &  &** \\
\\
\\
 & 
 \mu &  & \Block[draw={red}, rounded-corners]{2-3}{} t_{j_{\ell-1}}  & \cdots & t_{j_{\ell}-1}  \\
& *  &  &  t_{j_{\ell-1}+1}& \cdots & t_{j_{\ell}}  
\end{pNiceMatrix}
\quad \text{ or } \quad
\begin{pNiceMatrix}
\Block[draw={black}, rounded-corners]{4-4}{} \Block{4-4}{A_{\c'}(\z)} &  &  &  & ** \\
\\
\\
 &\mu  &  & \Block[draw={blue}, rounded-corners]{2-1}{} t_{j_{\ell-1}}   \\
& *  &  &  t_{j_{\ell}}  
\end{pNiceMatrix}
\quad \text{ or } \quad
\begin{pNiceMatrix}
\Block[draw={black}, rounded-corners]{4-4}{} \Block{4-4}{A_{\c'}(\z)} &  &  &  & ** \\
\\
\\
 & &  & \Block[draw={blue}, rounded-corners]{1-2}{} t_{j_{\ell-1}}   &  t_{j_{\ell}}
 \end{pNiceMatrix}.
\]

Consider a blank entry $*$ below an entry $\mu=\mu(t_1,\ldots,t_{j_{\ell-1}-1})$ of $A_{\c'}(\z)$.
We seek $q\in S_\c$ satisfying
\begin{equation}\label{eq: minor1-rev}
\mu(m_1,\ldots,m_{j_{\ell-1}-1})\cdot m_{j_{\ell-1}+1}
= q\cdot m_{j_{\ell-1}}.
\end{equation}
Choose $m_{i-1}\mid \mu(m_1,\ldots,m_{j_{\ell-1}-1})$ with $1\leq i\leq j_{\ell-1}$.
Convexity and \eqref{eq: convex property} give
$c_i+c_{j_{\ell-1}}<c_{i-1}+c_{j_{\ell-1}+1}$, since $c_{j_{\ell-1}}$ is a corner.
Thus $m_{j_{\ell-1}}\mid m_{i-1}m_{j_{\ell-1}+1}$ (strict divisibility), and
\[
q=\frac{\mu(m_1,\ldots,m_{j_{\ell-1}-1})m_{j_{\ell-1}+1}}{m_{j_{\ell-1}}}\in\kk[x,y].
\]
Closure of $S_\c$ under monomial division implies $q\in S_\c$, and strict divisibility implies that we may write
$q=\prod_i m_i^{\beta_i}$ with $\sum\beta_i\ge2$.
We fill $*$ with $\prod_i t_i^{\beta_i}$. All other blank entries $**$ are filled similarly.

It remains to check that $I_2(A_\c(\z))\subseteq J$.
Minors entirely contained in $A_{\c'}(\z)$ or $M_{\ell-1}$ lie in $J$ by induction
or the base case. 
At this point a matrix $A_\c(\z)$ has been determined, albeit not uniquely.  The other $2\times 2$ submatrices have one of the following  shapes, where $\theta, \theta'$ are entries of $A_{\c'}(\z)$ and $*, **$ and their primed versions are entries of $A_{\c}(\z)$ coming from the regions marked by these symbols above:
\[
\begin{pmatrix}
    \theta & **\\
    * & t_{k+1}
\end{pmatrix}
\text{ or }
\begin{pmatrix}
    \theta & **\\
    \theta' & t_k
\end{pmatrix}
\text{ or }
\begin{pmatrix}
    \theta & **\\
    \theta' & **'
\end{pmatrix}
\text{ or }
\begin{pmatrix}
    \theta & \theta'\\
    * & *'
\end{pmatrix}
\text{ or }
\begin{pmatrix}
    ** & **'\\
    t_i & t_j
\end{pmatrix}
\text{ or }
\begin{pmatrix}
    ** & **'\\
    **'' & **'''
\end{pmatrix}.
\]
We show that minors of the first type belong to $\ker(\psi)$; the other cases can be argued similarly. 
From
\eqref{eq: minor1-rev}, the analogous equation arising in the construction of the monomial $\mu'$ in position $**$, and relations given by the minors of $A_{\c'}(\z)$ and $M_{\ell-1}$, we have
\[
\frac{q'}{m_k}=\frac{\mu'(m_1, \ldots, m_s)}{m_{j_{\ell-1}}}=\frac{\theta(m_1, \ldots, m_s)}{\mu(m_1, \ldots, m_s)}
\]
\[
\frac{m_k}{m_{k+1}}=\frac{m_{j_{\ell-1}}}{m_{j_{\ell-1}+1}}=\frac{\mu(m_1, \ldots, m_s)}{q}.
\]
Multiplying the first and last fractions in the identities displayed above gives
\[
\frac{q'}{m_{k+1}}=\frac{\theta(m_1, \ldots, m_s)}{q},
\]
which shows that desired minor belongs to $J$. 
\end{proof}

\begin{remark}\label[remark]{rem: filling}
The matrix $A_\c(\z)$ can be filled explicitly using the relations from
\cite[Lemma~1.2(b)]{ConvexIdeals}:
$\frac{m_{j+2}}{m_{j+1}}=\frac{m_{j+1}}{m_j}$ when $c_j,c_{j+1},c_{j+2}$ are collinear,
and $\frac{m_{j_k+1}}{m_{j_k}}=\frac{m_{j_k}^{q_k}}{m_{j_k-1}}$ for $q_k\ge2$ when
$c_{j_k}$ is a corner point. A fully worked out family of examples appears in \Cref{thm: 3 corner}. 
\end{remark}

To complement \Cref{thm: convex fiber ring}, we show that convex semigroup rings admit
determinantal presentations lifting those of their fiber cones. The connection is
made precise in \Cref{thm: convex semigroup} below.
Elucidating the properties of convex semigroup rings gives the main result of this section.  

\begin{theorem}\label{thm: convex semigroup}
Let $\c=\{c_i=(a_i,b_i)\}_{i=1}^s\subset \NN^2$ be a convex sequence and let $S_\c$ be the convex semigroup ring described in \eqref{eq: Sc}. Denote by $I=(x^{a_i}y^{b_i} \mid 1\leq i\leq s)$ and let $\mm=I\cap S_\c$. Assume that $S_\c$ is closed under monomial division in $\kk[x,y]$.
Then
\begin{enumerate}
\item the associated graded ring $\gr_\mm S_\c$ is isomorphic to the fiber ring $\F(I)$ as graded rings;
\item $S_\c$ has a determinantal presentation $S_\c=\kk[t_1,\ldots, t_s]/J$,
where $J$ is the ideal of $2\times 2$ minors of a matrix $A_\c$ whose entries are nonconstant monomials in the variables $t_1, \ldots, t_s$;
\item the total Betti numbers of $S_\c$ over $\kk[t_1,\ldots, t_s]$ depend only on the number $s$ of terms in the convex sequence and are given by
\[
\beta_i^R(S_\c)= i\binom{s-1}{i+1}.
\]
\item the graded Betti numbers expressed using the notation in \eqref{eq: gap} are
\[
\beta_{i,j}(S_\c)=\sum_{\substack{A\subseteq [s], |A|=i, \\\sum_{\ell\in A} a_\ell+b_\ell=j}} \gap(A)
\]
 and the Hilbert series of $S_\c$ depends only on the degrees $d_i=a_i+b_i$ of the minimal algebra generators and is given by \eqref{eq: HS 2dim};
\item  the minimal generators of $J$ are the $2\times 2$ minors of $A_\c$ given by submatrices whose main diagonal entries are variables. They form a Gr\"obner basis under the lexicographic order induced by $t_1>t_2>\cdots>t_n$;
\item $S_\c$ is a normal, Cohen--Macaulay, Koszul algebra;
\item the Betti numbers of the residue field $\kk$ over $S_\c$ are given by
\[
\beta_i^{S_\c}(\kk)= (s-2)^{i-2}(s-1)^2 \text{ for } i\geq 2.
\]
\end{enumerate}
\end{theorem}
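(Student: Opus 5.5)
The plan is to prove item (1) first. It drives everything else through the degeneration of $S_\c$ to $\gr_\mm S_\c$. We assume throughout that every $c_i\neq 0$, so each $m_i=x^{a_i}y^{b_i}$ is nonconstant.

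\emph{Item (1).} Both $\gr_\mm S_\c$ and $\F(I)$ are quotients of $Q=\kk[t_1,\ldots,t_s]$ via $t_i\mapsto \overline{m_i}$. Both are $\ZZ^2$-graded by $x,y$-exponents and spanned by images of monomials. For a product $u=m_{i_1}\cdots m_{i_n}$:
\begin{itemize}
\item $u=0$ in $\mm^n/\mm^{n+1}$ exactly when $u$ is also a product of more than $n$ generators;
\item $u=0$ in $I^n/\mm_Q I^n$ exactly when $u=v\,w$ with $v\in I^n$ a product of $n$ generators and $w$ a nonconstant monomial of $\kk[x,y]$.
\end{itemize}
These two conditions coincide. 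If $u=v\,w$ as above, closure under monomial division gives $w=u/v\in S_\c$. Since $w$ is nonconstant, $w\in\mm$, so $u$ has order $\geq n+1$. Conversely, a product of $n+1$ generators lies in $\mm_Q I^n$. In both rings, two products of the same length with the same exponent vector have equal images. Hence both kernels are spanned by the same binomials and monomials, and they agree. This gives $\gr_\mm S_\c\cong\F(I)$, and the common kernel is $L$.

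\emph{Items (2) and (5).} By \Cref{lem: fill Ac} we have $J':=I_2(A_\c(\z))\subseteq J=\ker(Q\to S_\c)$. Filtering $Q$ by $t$-degree, (1) shows that the ideal $J^*$ of lowest-degree forms of $J$ equals $L$. Now take lowest forms of the minors of $A_\c(\z)$ whose main diagonal entries are variables, using that the filled entries have degree $\geq 2$:
\begin{itemize}
\item minors inside a block $M_k$ give exactly $\mathcal B$;
\item the other such minors have the form $t_it_j-(\text{higher degree})$, and their lowest forms supply the monomials of $\mathcal M$.
\end{itemize}
So $J'^*\supseteq L=J^*\supseteq J'^*$. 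Since $J'\subseteq J$ are homogeneous for the positive weights $\deg t_i=a_i+b_i$, equality of tangent cones forces $J'=J$. This uses the usual standard-basis/Nakayama argument for graded ideals, where local and graded minimality agree.

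For the Gröbner statement in (5), I would show that the lex-leading term of each mixed minor is $t_it_j$. Then $in_<(J)\supseteq (t_it_j\mid i<j-1)=in_<(L)$. Both ideals have the same $\ZZ^2$-graded Hilbert function, because $J$, $L$ and $\gr$ share multigraded Hilbert functions. So equality holds. Verifying the leading-term claim from the explicit filling (cf.\ \Cref{rem: filling}) is the step I expect to be the main obstacle. I would first try to show that each filled entry only involves variables of index larger than the diagonal variable it is compared with.

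\emph{Items (3) and (4).} Lift the minimal ($t$-linear) resolution of $Q/L$ from \Cref{prop: betti fiber} along the $t$-degree filtration to a free resolution of $Q/J$, as in the standard tangent-cone argument. The lifted differentials have initial forms equal to the linear differentials of the $L$-resolution. Hence they have no unit entries, and the lift is minimal. Therefore $\beta_i(S_\c)=\beta_i(\F(I))=i\binom{s-1}{i+1}$.

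For the graded version, the resolution of $Q/L$ is in turn obtained from that of $Q/in_<(L)$ without cancellation, since it is linear. So the $\ZZ^2$-degrees (equivalently the weights $\sum_{\ell\in A}d_\ell$) of the shifts are those in \Cref{prop:closedformulamonomial}. This gives the stated graded Betti numbers and the Hilbert series \eqref{eq: HS 2dim}.

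\emph{Items (6) and (7).} Normality follows from \Cref{lem: normal}, and Cohen--Macaulayness from Hochster's theorem. Koszulness holds since $\gr_\mm S_\c\cong\F(I)$ is Koszul by \Cref{thm: convex fiber ring}.

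For (7), Koszulness of $\gr_\mm S_\c$ implies that the Poincaré series of $\kk$ over $S_\c$ equals that over $\gr_\mm S_\c$. The latter is $1/H_{\F(I)}(-z)=(1+z)^2/(1-(s-2)z)$. Expanding, the coefficient of $z^i$ for $i\geq 2$ is
\[
(s-2)^i+2(s-2)^{i-1}+(s-2)^{i-2}=(s-2)^{i-2}(s-1)^2 .
\]
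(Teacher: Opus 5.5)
\paragraph{Verdict.} Your proposal is correct, and for items (1), (2), (5) and (6) it follows the paper's argument. Closure under monomial division identifies $\ker(Q\to\gr_\mm S_\c)$ with $\ker(Q\to\F(I))$. Then $J^*=L$, \Cref{lem: fill Ac} and weighted homogeneity give $I_2(A_\c(\z))=J$. The Gr\"obner claim reduces to leading terms plus equality of Hilbert functions; minimality of the minors then follows from the count $\beta_0(J)=\binom{s-1}{2}$ in (3), as in the paper.

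\paragraph{Where the routes differ.}
\begin{itemize}
\item For (3) you lift the linear resolution of $Q/L$ along the $t$-adic filtration. This is the same phenomenon the paper invokes through Herzog--Iyengar.
\item For (4) you should state explicitly that this lift is carried out in the $\ZZ^2$-graded category. That is possible because $J$, $L$ and the filtration are all compatible with $\deg t_i=c_i$; without it, only total Betti numbers transfer. The paper goes the other way, passing from the weighted-graded minimal resolution of $S_\c$ to its linear part.
\item For Cohen--Macaulayness you use Hochster's theorem rather than descending from $\gr_\mm S_\c$. Both are fine.
\item The genuine divergence is (7). The paper quotes \cite{PRS} for two-dimensional normal semigroup rings. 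You instead compute $P^{S_\c}_\kk(z)=1/H_{\F(I)}(-z)=(1+z)^2/(1-(s-2)z)$ from Koszulness of $\gr_\mm S_\c$. Your route is self-contained and shows the formula needs only Koszulness and the Hilbert series of $\F(I)$, not normality.
\end{itemize}

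\paragraph{The step you flag as the main obstacle.} The paper only asserts this step, and it closes quickly.
\begin{itemize}
\item After substitution, all $2\times 2$ minors of $A_\c(\z)$ vanish and the entries are nonzero monomials. So the exponent of entry $(r,c)$ has the form $\alpha_r+\beta_c$.
\item Horizontally or vertically adjacent skeleton variables carry consecutive indices. Hence each row step and each column step equals some step vector $c_{p+1}-c_p$.
\item Since the $m_i$ minimally generate $I$, the $b_i$ strictly decrease. So every step vector has negative second coordinate.
\item Relative to a diagonal variable $t_i$, both off-diagonal entries have exponent $c_i$ plus a nonempty sum of step vectors. Their $y$-exponent is therefore strictly below $b_i\le b_k$ for all $k\le i$.
\item So no $m_k$ with $k\le i$ divides them, whatever representation was chosen when filling the blanks. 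Only $t_k$ with $k>i$ occur, and the lex-leading term of the minor is $t_it_j$.
\end{itemize}
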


\begin{proof}
(1)
Let $Q=\kk[t_1,\ldots, t_s]$ and set $m_i=x^{a_i}y^{b_i}$ for each $1\leq i\leq s$. There are degree-preserving $\kk$-algebra epimorphisms $\varphi:Q\to \F(I)$ with $\varphi(t_i)=\overline{m_i}\in I/(x,y)I$ as in \eqref{eq: phi} and $\psi:Q\to \gr_\mm S_\c$ with $\psi(t_i)=\overline{m_i}\in \mm/\mm^2$. We aim to show that $\ker(\varphi)=\ker(\psi)$.

Let $F\in \ker(\varphi)$ be a homogeneous polynomial of $\deg(F)=\delta$ with respect to the standard grading on $Q$.
Then, by definition of the fiber ring \eqref{eq: fiber cone}, $F(m_1, \ldots, m_s)$ is not a minimal generator of $I^\delta$.
Thus there exist elements $g_i\in (x,y)\subset \kk[x,y]$ which may be assumed to be monomials and tuples $\alpha_i=(\alpha_{ij})_{j=1}^s$ such that for each $i$ we have $\sum_{j=1}^s \alpha_{ij}=\delta$ and
\begin{equation}\label{eq: rel}
F(m_1, \ldots, m_s)=\sum_{i=1}^r g_i \prod_{j=1}^sm_j^{\alpha_{ij}}
\end{equation}
We may also assume that the right hand side of \eqref{eq: rel} is cancellation-free so for each $1\leq i\leq r$ there must be a corresponding term $\prod_{j=1}^t t_j^{\beta_{ij}}$ in $F$ so that $g_i\prod_{j=1}^sm_j^{\alpha_{ij}}=\prod_{j=1}^t m_j^{\beta_{ij}}$. Since $S_\c$ is closed under monomial division in $\kk[x,y]$ it follows that $g_i\in S_\c$, and in fact that $g_i\in \mm$ since these elements are not constants. Thus the summation on the right hand side of \eqref{eq: rel} belongs to $\mm^{\delta+1}$ and consequently so does $F(m_1, \ldots, m_s)$. Therefore in $\gr_\mm S_\c$, we have $\psi(F)=\overline{F}=0\in \mm^{\delta}/\mm^{\delta+1}$, that is, $F\in \ker(\psi)$.

Conversely if $F\in \ker(\psi)$ then  an equation of the form \eqref{eq: rel} is satisfied where each $g_i\in \mm$. As \eqref{eq: rel} demonstrates that $F$ is not a minimal generator of $I^\delta$, we deduce from \eqref{eq: fiber cone} that $F\in \ker(\varphi)$.

Since the maps $\varphi$ and $\psi$ have the same kernel, their images are isomorphic, that is, $\F(I)\cong \gr_\mm S_\c$.

(2)
Let $A_\c$ denote the skeleton matrix in~\Cref{const: Ac}.
Denote by $A_\c(0)$ the matrix obtained by filling the blank entries of $A_\c$ with zeros and notice that the $2\times 2$ minors of $A_\c(0)$ generate precisely the ideal $L$ from \Cref{thm: convex fiber ring}.
This yields an isomorphism of graded rings $\F(I)\cong Q/I_2(A_\c(0))$.
Recall that $S_\c=Q/J$. By \Cref{lem: fill Ac}
a complete matrix $A_\c(\z)$  can be determined such that $I_2(A_\c(\z))\subseteq J$. We aim to show that this containment is in fact an equality.

Set $m_Q$ to be the homogeneous maximal ideal of $Q$. For $F\in Q$, let $F^*$ denote the image of $F$ in $\gr_{m_Q} Q$ and for an ideal $U$ of $Q$ let $U^*=\{F^* \mid F\in U\}$. Since $I_2(A_\c(\z)) \subseteq J$ it follows that $I_2(A_\c(\z))^* \subseteq J^*$. On the other hand, since the entries of $A_\c(\z)$ are at least quadratic with respect to the standard grading of $Q$ outside the skeleton blocks and linear inside these blocks, we have  $I_2(A_\c(0))\subseteq I_2(A_\c(\z))^*$. Finally, by part (1) and \Cref{thm: convex fiber ring}, $J^*=I_2(A_\c(0))$, which gives equalities
\begin{equation}\label{eq: gre equality}
\gr_{m_Q}Q/I_2(A_\c(\z))= \frac{\gr_{m_Q}Q}{ I_2(A_\c(\z))^*}=\frac{Q}{I_2(A_\c(0))}= \frac{\gr_{m_Q}Q}{J^*}=\gr_{m_Q} Q/J.
\end{equation}

Next, we proceed to show that  the equality $J=I_2(A_\c(\z))$ holds.  The containment $I_2(A_\c(\z))\subseteq J$ was shown above. Consider the grading on $Q$ given by  $\deg(t_i)=\deg(m_i)$, which makes $S_\c\cong Q/J$ a graded isomorphism. The ideal $J$ is homogeneous with respect to this grading in view of  the preceding isomorphism and we claim that its subset $I_2(A_\c(\z))$ is homogeneous as well. Indeed, the generators of $I_2(A_\c(\z))$ are binomials, so their homogeneous components are either monomials or the binomials themselves and these homogeneous components belong to $J$. However, $J$ is a prime ideal so it contains no monomials. Thus the generators of $I_2(A_\c(\z))$ are homogeneous.

Let $F\in J$ be a homogeneous element  of $m_Q$-adic order $j$ and degree $d$ with respect to the above non-standard grading on $Q$. Recall that $F^*$ denotes the image of $F$ in $\gr_{m_Q} Q$. By \eqref{eq: gre equality}, there exists an element $G_1\in  I_2(A_\c(\z))$ so that $F^*=G_1^*\in m_Q^j/m_Q^{j+1}$. Moreover, since $F$ is homogeneous of degree $d$, $G_1$ can be chosen to be homogeneous of degree $d$ as well. It follows that $F-G_1\in J$ has  $m_Q$-adic order at least $j+1$ and degree $d$. Continuing in this way, by induction on $t$, one can find a sequence of homogeneous elements $\{G_i\}_{i\geq 1}$ from $[I_2(A_\c(\z))]_d$ so that for every $t\geq 1$ we have
\[
F-\sum_{i=1}^t G_i \in J_d\cap m_Q^{j+t}.
\]
Since $J_d$ is a finite dimensional $\kk$-vector space, for $t\gg0$ the right hand side of the above display is zero, revealing that $F=\sum_{i=1}^t G_i\in I_2(A_\c(\z))$. This  establishes the desired equality $J=I_2(A_\c(\z))$.

(3) By (1) and \Cref{prop: betti fiber}, the Betti numbers of $\gr_\mm S_\c$ over $\gr_{m_Q} Q$ are as given in \eqref{eq: betti}. In particular, the ideal $J^*$ has  linear resolution. By \cite[Proposition 1.5]{HI} (see also \cite{RS}),
since the minimal graded free resolution of $J^*$ is linear (so $J$ is a Koszul module in the language of Herzog--Iyengar), the Betti numbers of $J^*$ and $J$, thus also those of $\gr_\mm S_\c$ and $S_\c$, coincide. 
By (1), \Cref{prop: betti fiber} can be applied to obtain the graded Betti numbers of $\gr_\mm S_\c$ and \eqref{eq: betti} yields the desired claim.

(4) We expand on the argument given in part~(3). Let $F_\bullet$ be a minimal free resolution of $S_\c$ over $Q$. This minimal free resolution is homogeneous with respect to the non-standard grading of $Q$. A complex of free $\gr_{m_Q} Q$-modules ${\rm lin}(F_\bullet)$ is obtained  by filtering $F_\bullet$ by the powers of $\mm$. Concretely, the differential of ${\rm lin}(F_\bullet)$ is obtained by retaining only the terms in $\mm\setminus \mm^2$ in the matrices representing the differential of $F_\bullet$. By part (3) the complex ${\rm lin}(F_\bullet)$ is a minimal free resolution of $\gr_\mm S_\c$ over $\gr_{m_Q} Q\cong Q$. This resolution is naturally standard graded. However, since its differentials consist of terms from the differential of  $(F_\bullet)$, the complex ${\rm lin}(F_\bullet)$ is also homogeneous and has the same graded twists as $F_\bullet$ with respect to the non-standard grading of $Q$. Thus the graded Betti numbers and the Hilbert function of $S_\c$ are the same as that of $\gr_\mm S_\c$ viewed as a quotient of $Q$ equipped with the non-standard grading $\deg(t_i)=d_i$. 

By part~(1) the invariants of interest of $S_\c$ are the same as that of $\F(I)$, which by \Cref{thm: convex fiber ring} are the same as that of its initial algebra $Q/(t_it_j \mid |i-j|>1)$ equipped with the same non-standard grading; see for example \cite[Corollary 39.7]{Peevabook}. For the latter, the Hilbert series and graded Betti numbers are determined in \eqref{eq: multiBetti} and \eqref{eq: HS 2dim}, respectively.

(5) By (3), $\beta_0(J)=\beta_0(J^*)=\binom{s-1}{2}$. To see that the $2\times 2$ minors of $A_\c(\z)$ given by submatrices whose main diagonal entries are variables are a subset of the minimal generators of $J$ it suffices to notice that their images in $Q/m_Q^3$  are distinct monomials $\{t_it_j \mid |i-j|>1\}$, thus linearly independent and therefore these minors span a vector space of dimension  $\beta_0(J)$. 

To see that these generators form a Gr\"obner basis with respect to the lexicographic order it suffices to show that their initial monomials are $\{t_it_j \mid |i-j|>1\}$, and that the Hilbert functions of $S_\c$ and $Q/(t_it_j \mid |i-j|>1)$ agree.
The former assertion follows from  the proof of \Cref{lem: fill Ac} and \Cref{rem: filling}, which show that in each $2\times 2$ minor of $A_\c(\z)$ which has variables $t_i, t_j$ with $i<j$ along the main diagonal, the support of the entries not along the main diagonal involves only variables $t_k$ with $i<k$. The latter assertion follows by comparing item (3) and \Cref{prop:closedformulamonomial}.

(6) By \Cref{lem: normal}, $S_\c$ is normal. Since by part (1) and \Cref{thm: convex fiber ring} the ring $\gr_\mm S_\c$ is Cohen-Macaulay and Koszul the same is true for $S_\c$, in the case of the Cohen-Macaulay property by \cite[Proposition 25.4]{Peevabook} and in the case of the Koszul property by definition.

(7) By part~(6), $S_\c$ is normal. The given formula applies for Betti numbers of $\kk$ over two-dimensional normal semigroup rings by \cite[Theorem 3.9]{PRS}.
\end{proof}

\section{Two-dimensional  Veronese rings}\label{s: 2dim}

An affine semigroup ring is a subring of $\kk[x_1^{\pm1}, \ldots, x_n^{\pm 1}]$ generated by Laurent monomials. Normal affine semigroup rings are the coordinate rings of affine normal toric varieties. In this section we discuss  two-dimensional normal affine semigroup rings observing that the bulk of their properties reduce to those of weighted Veronese rings. This is essentially a consequence of the fact that all two-dimensional cones are simplicial. 

We will prove the following implications in \Cref{prop: 2Dnormal_is_Veronese} and \Cref{lem.ConvexGens}:
\begin{quote}
    $S=$ 2-dimensional normal affine semigroup ring
    $\Longrightarrow$
    $S\cong V_{(1,c),d}$ for $c<d$ \\ \ $\Longrightarrow$ $S$ is isomorphic (non graded isomorphism) to a convex semigroup.
\end{quote}
This will allow to apply the results in \Cref{thm: convex semigroup} that are independent of grading to all two-dimensional normal semigroup rings.

The following result is an algebraic version of the fact that affine toric surfaces have cyclic quotient singularities \cite[\S2.2]{Fulton}. See \cite[Proposition 10.1.1]{CLS} for a version of this result that does not mention Veronese rings.

\begin{proposition}\label[proposition]{prop: 2Dnormal_is_Veronese}
   Any two-dimensional normal  affine semigroup ring is isomorphic to a weighted Veronese ring $V_{(1,c),d}$ with $0\le c<d$, and $\gcd(c,d)=1$ by a not necessarily degree-preserving isomorphism.
\end{proposition}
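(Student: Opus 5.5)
Let $S=\kk[\sigma^\vee\cap M]$ be a two-dimensional normal affine semigroup ring, where $M\cong\ZZ^2$ is the lattice generated by the semigroup and $\sigma^\vee\subset M_\RR$ is the rational polyhedral cone it spans. Normality says that $S$ consists of all monomials whose exponents lie in $\sigma^\vee\cap M$. The plan is to choose coordinates in which $S$ literally becomes the ring of monomials $x_1^ux_2^v$ with $u+cv\equiv 0 \pmod d$. That ring is, by definition, $V_{(1,c),d}$ with $\deg x_1=1$ and $\deg x_2=c$. The isomorphism will only be one of $\kk$-algebras, not of graded rings.

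First we dispose of the degenerate cases.
\begin{itemize}
\item If $\sigma^\vee$ is a full half-plane or the whole plane, then $S$ contains units. These cases are either excluded by positivity or must be handled separately.
\item If $\sigma^\vee$ is a full-dimensional strongly convex cone generated by a lattice basis, then $S\cong\kk[x_1,x_2]=V_{(1,0),1}$. This corresponds to $c=0$, $d=1$.
\end{itemize}

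In the main case $\sigma^\vee$ is a full-dimensional strongly convex cone. We pass to the dual cone $\sigma\subset N_\RR$, which is also two-dimensional and strongly convex. Let $v_1,v_2$ be the primitive lattice vectors on its rays. The standard normal-form step \cite[Proposition 10.1.1]{CLS} (Hermite normal form) gives a basis $e_1,e_2$ of $N$ with $v_1=e_2$ and $v_2=de_1-ce_2$, where $d>0$, $0\le c<d$ and $\gcd(c,d)=1$. Concretely, extend $v_1$ to a basis, write $v_2=de_1+ke_2$ with $d>0$, and replace $e_1$ by $e_1+je_2$ to bring $-k$ into $[0,d)$. Primitivity of $v_2$ then forces $\gcd(c,d)=1$.

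Next we compute $\sigma^\vee\cap M$ in the dual basis:
\[
\sigma^\vee\cap M=\{m\in\ZZ^2 : m_2\ge0,\ dm_1-cm_2\ge 0\}.
\]
Define the injective linear map $\Phi(m)=(u,v)$ with $u=dm_1-cm_2$ and $v=m_2$. It sends $\sigma^\vee\cap M$ bijectively onto
\[
\{(u,v)\in\NN^2 : u\equiv -cv \pmod d\}.
\]
Indeed, given such $(u,v)$, the vector $m=((u+cv)/d,\,v)$ is integral. The condition $u+cv\equiv 0\pmod d$ is exactly the statement that $x_1^ux_2^v$ has degree divisible by $d$ when $\deg x_1=1$ and $\deg x_2=c$. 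Therefore $\Phi$ induces a semigroup isomorphism, and hence a $\kk$-algebra isomorphism $S\cong V_{(1,c),d}$.

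The main obstacle is bookkeeping rather than depth: the normal form must produce exactly the congruence $u+cv\equiv0$, with the correct sign and with $c$ in the range $[0,d)$. One also has to handle $c=0$ consistently: then $\gcd(0,d)=1$ forces $d=1$, which is the smooth case above. If needed, the roles of $x_1$ and $x_2$ can be swapped, or $c$ replaced by its inverse modulo $d$, to match the paper's normalization.
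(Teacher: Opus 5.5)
Your proof is correct and is essentially the paper's argument. The paper puts the cone of the semigroup directly in the form $\operatorname{Cone}((1,0),(c,d))$ and uses the same map $(i,j)\mapsto(di-cj,j)$; you reach the identical description $\{m_2\ge 0,\ dm_1-cm_2\ge 0\}$ by first normalizing the dual cone, which is a harmless detour.

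One small correction: the non-pointed cases cannot be ``handled separately.'' Rings such as $\kk[x,y^{\pm1}]$ have nonconstant units, while $V_{(1,c),d}\subseteq\kk[x_1,x_2]$ does not, so these cases must simply be excluded, as the paper does implicitly by assuming the cone is strongly convex.
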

\begin{proof}
Conversely, let $S$ be a normal affine semigroup ring of dimension $2$.  Then there exists  a strongly convex rational cone $C$ with primitive generators
$u,v\in\ZZ^2$ and $S=\kk[C\cap\ZZ^2]$. After a unimodular change of coordinates, we may assume
$u=(1,0)$ and $v=(c,d)$ with $d\ge1$, $0\le c<d$, and $\gcd(c,d)=1$. The inequality $0\le c<d$ can be achieved by reducing the first coordinate of the image of $v$ modulo the second using the Euclidean algorithm and primitivity forces $\gcd(c,d)=1$.
Thus
\[
C\cap \ZZ^2=\{(i,j)\in\NN^2\mid di-cj\ge0, i\geq 0, j\geq 0\}.
\]
The desired isomorphism comes from the lattice (injective) homomorphism mapping $C$ to the positive orthant by  $(i,j)\mapsto (di-cj,j)$. The lattice points $(a,b)=(di-cj,j)$ in the image satisfy $a+cb=di$ for integers $i\geq 0$, which gives the semigroup isomorphism, and therefore, the desired isomorphism
\[
S\cong \kk[x^{di-cj}y^j\mid (i,j)\in C\cap \ZZ^2]\cong V_{(1,c),d}. \qedhere
\]
\end{proof}

We now turn to the property of a two-dimensional affine semigroup ring of being convex. Our main examples of convex semigroup rings are the two-dimensional Veronese rings.
The next result shows that they fit into the framework developed in~\Cref{s: convex semigroups}.

\begin{proposition}\label[proposition]{lem.ConvexGens}
Two-dimensional Veronese rings $V_{(a,b),d}$ are convex semigroup rings.
\end{proposition}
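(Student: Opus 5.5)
The plan is to identify $V_{(a,b),d}$ with the semigroup ring of $S=\NN^2\cap L$, where
\[
L=\{(i,j)\in\ZZ^2 \mid ai+bj\equiv 0 \pmod d\}.
\]
The minimal monomial generators are then the Hilbert basis of $S$, and I will show that, in their natural order along the boundary, they satisfy a relation
\[
c_{i-1}+c_{i+1}=q_i c_i \quad\text{with } q_i\ge 2,
\]
the Hirzebruch--Jung type relation. Convexity in the sense of \Cref{def:convex} then follows at once. Indeed, $c_{i-1}+c_{i+1}-2c_i=(q_i-2)c_i$, and since $c_i\in\NN^2$ this vector has nonnegative entries, which is exactly the componentwise inequality \eqref{eq: convex}.

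\textbf{Setup and ordering.} The lattice $L$ has finite index in $\ZZ^2$ and contains $de_1$ and $de_2$. Hence the real cone spanned by $S$ is the whole positive quadrant, and $S$ is the set of lattice points of $L$ in that quadrant (a normal semigroup). Let $P=\operatorname{conv}(S\setminus\{0\})$. Its bounded boundary is a polygonal chain running from the point $(u,0)$, with $u$ minimal on the $x$-axis, to the point $(0,v)$, with $v$ minimal on the $y$-axis.

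Let $c_1,\dots,c_s$ be the points of $L$ on this chain. I list them by increasing first coordinate, so from $(0,v)$ to $(u,0)$ in the convention of \Cref{def:convex}. Along the chain the first coordinates strictly increase and the second coordinates strictly decrease, by convexity of $P$ together with the fact that the chain meets each axis only at its endpoint.

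\textbf{Key steps.}

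(i) Consecutive points $c_i,c_{i+1}$ form a $\ZZ$-basis of $L$. I will argue that the triangle with vertices $0,c_i,c_{i+1}$ contains no points of $L$ other than its vertices.
\begin{itemize}
\item An interior point, or a point on the edge $[c_i,c_{i+1}]$, would be a nonzero point of $S$ lying strictly on the origin side of the chain, or strictly between consecutive chain points. Either is impossible.
\item A point on an edge $[0,c_i]$ would be a nonzero point $c_i/k\in S$ lying strictly inside $P$'s complement near the origin, which is again impossible.
\end{itemize}
An empty lattice triangle is unimodular for its lattice, so $\{c_i,c_{i+1}\}$ is a basis of $L$.

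(ii) The points $c_i$ generate $S$, and minimally so. Every point of $S$ lies in some cone $\RR_{\ge0}c_i+\RR_{\ge0}c_{i+1}$. By (i) it is therefore a nonnegative integer combination of $c_i$ and $c_{i+1}$. Conversely, each $c_i$ lies on the boundary of $P$, so it cannot be a sum of two nonzero elements of $S$; such a sum would lie beyond the chain. Hence $\{x^{a_i}y^{b_i}\}$ is exactly the set of minimal algebra generators of $V_{(a,b),d}$.

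(iii) By (i), $\det(c_{i-1},c_i)=\det(c_i,c_{i+1})=[\ZZ^2:L]$, taking both with the same orientation. Consequently $\det(c_i,\,c_{i-1}+c_{i+1})=0$, so $c_{i-1}+c_{i+1}$ is a rational multiple of $c_i$. Since $c_i$ is primitive in $L$ (by the edge argument in (i)), this multiple is an integer, $q_i c_i$. Finally, the midpoint $\tfrac12(c_{i-1}+c_{i+1})$ lies in $P$, and $c_i$ is the point of $P$ on that ray closest to the origin. This forces $q_i/2\ge 1$, that is, $q_i\ge 2$.

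Corner points of the sequence correspond exactly to $q_i>2$, matching \Cref{def:convex}.

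\textbf{Main obstacle.} The step needing the most care is (i): the emptiness of the triangles and the primitivity of the $c_i$. In particular one has to check the degenerate cases where $\gcd(a,d)$ or $\gcd(b,d)$ is not $1$, or where $a$ or $b$ is $0$, so that $u$ or $v$ may be smaller than $d$. I expect the argument above to go through unchanged in these cases, because it uses only that $L$ is a full-rank lattice containing $de_1$ and $de_2$. I would double-check the endpoint conventions there, and confirm the result on \Cref{ex: convex}.
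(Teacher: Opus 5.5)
Your proof is correct, and it takes a genuinely different route from the paper's. The paper never forms a convex hull; it works only with minimality of the nonnegative solutions of $ax+by=dz$. It first shows that the $y$-exponents of the generators are distinct and that the $x$-exponents increase as the $y$-exponents decrease. It then assumes $2c_i\not\le c_{i-1}+c_{i+1}$ and splits into two cases. If both coordinates fail, $c_i$ is the sum of the two nonzero solutions $2c_i-c_{i-1}-c_{i+1}$ and $c_{i-1}+c_{i+1}-c_i$, contradicting minimality of $m_i$. If only one coordinate fails, $c_{i-1}+c_{i+1}-c_i$ is a solution whose minimal summand would be a generator strictly between $m_{i-1}$ and $m_i$. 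You instead use the Klein polygon (Hirzebruch--Jung) picture. The Hilbert basis is the set of lattice points on the compact boundary of $\operatorname{conv}(S\setminus\{0\})$. Consecutive points span empty triangles and hence form bases of $L$, and equality of the determinants gives $c_{i-1}+c_{i+1}=q_ic_i$ with $q_i\ge 2$.

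The paper's argument is elementary and self-contained, using no polyhedral facts and no unimodularity of empty triangles, but it only yields the inequality, and only by contradiction. Yours gives more:
\begin{itemize}
\item an explicit description of the generators;
\item the exact identity $c_{i-1}+c_{i+1}=q_ic_i$, with corners characterized by $q_i>2$ (this is precisely the kind of relation $m_{i-1}m_{i+1}=m_i^{q}$ that the paper invokes in \Cref{rem: filling} and \Cref{thm: 3 corner} to fill in the skeleton matrix);
\item validity for $\NN^2\cap L$ with any full-rank lattice $L\subseteq\ZZ^2$.
\end{itemize}
It also explains why the componentwise inequality of \Cref{def:convex} holds. That inequality is strictly stronger than convexity of the boundary chain: for example, $(0,10),(4,1),(5,0)$ is a convex chain that violates it. It is the unimodularity of consecutive pairs, not convexity of $P$ alone, that makes the second difference a nonnegative multiple of $c_i$.

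The step you flagged as delicate does go through. Two facts should be stated explicitly there:
\begin{itemize}
\item consecutive chain points lie on a common edge of $P$, because vertices of $P$ belong to $S$;
\item that edge lies on a line $\alpha x+\beta y=\gamma$ with $\alpha,\beta,\gamma>0$ and $P\subseteq\{\alpha x+\beta y\ge\gamma\}$.
\end{itemize}
Together with consecutiveness, these exclude every other lattice point of the triangle with vertices $0,c_i,c_{i+1}$, including any point of $L$ on the open segments from $0$ to $c_i$ and from $0$ to $c_{i+1}$.
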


\begin{proof}
Let $m_i=x^{a_i}y^{b_i}$ be a minimal set of $\kk$-algebra generators of
$V_{(a,b),d}$, and order them so that the exponent of $y$ decreases, i.e.
$b_{i+1}\leq b_{i}$ for $i=1,\ldots,s-1$.
Then
\[
\left\{\left(a_{i},b_i,\frac{a \,a_i+b \,b_i}{d}\right)\mid i=1,\ldots,s \right\}
\]
is the set of \emph{minimal} nonnegative solutions of the linear Diophantine equation
\begin{equation}\label{eq: diophantine}
ax+by=dz.
\end{equation}
We record two immediate consequences of minimality:
\begin{itemize}
\item $b_i\neq b_j$ for $i\neq j$. If $b_i=b_j$, then
$\left(|a_i-a_j|,0,\frac{a|a_i-a_j|}{d}\right)$ is a nonzero solution of
\eqref{eq: diophantine}, so $m_i$ and $m_j$ cannot both be minimal algebra generators.
\item If $b_i>b_j$, then $a_i<a_j$. Otherwise, if $a_i\ge a_j$, then
\[
\left(a_i-a_j,\, b_i-b_j,\,
\frac{a(a_i-a_j)+b(b_i-b_j)}{d}\right)
\]
is a nonzero solution to \eqref{eq: diophantine}, contradicting the minimality of $m_j$.
\end{itemize}
Hence, after reindexing if necessary, we may write $c_i=(a_i,b_i)\in \NN^2$ and assume
\[
b_1 > b_2\cdots > b_s 
\qquad\text{and}\qquad
a_1< a_2< \cdots <a_s.
\]
We claim that $\{c_i\}_{i=1}^s$ is convex.
Suppose not, and fix $i$ with $2c_i\not\le c_{i-1}+c_{i+1}$.

 \emph{Case A:} $2c_i>c_{i-1}+c_{i+1}$ componentwise.
Set
\[
k=(k_1,k_2):=2c_i-c_{i-1}-c_{i+1},\qquad t=(t_1,t_2):=c_{i-1}+c_{i+1}-c_i.
\]
Then $k,t\in \NN^2$ and, using the strict monotonicity of the coordinates,
\begin{align*}
0<k_1&=2a_i-a_{i-1}-a_{i+1}<a_{i}-a_{i-1}\le a_{i},\\
0<k_2&=2b_{i}-b_{i-1}-b_{i+1}<b_{i}-b_{i+1}<b_{i},
\end{align*}
and
\begin{align*}
0\le a_{i-1}<t_1&=a_{i-1}+a_{i+1}-a_{i}<a_{i},\\
0\le b_{i+1}<t_2&=b_{i-1}+b_{i+1}-b_{i}<b_{i}.
\end{align*}
Since the triples corresponding to $c_{i-1},c_i,c_{i+1}$ solve \eqref{eq: diophantine},
it follows that both
\[
\left(k_1,k_2,\frac{ak_1+bk_2}{d}\right)
\quad\text{and}\quad
\left(t_1,t_2,\frac{at_1+bt_2}{d}\right)
\]
are nonnegative solutions of \eqref{eq: diophantine}, and their sum equals
$\left(c_{1,i},c_{2,i},\frac{ac_{1,i}+bc_{2,i}}{d}\right)$.
This contradicts the minimality of $m_i$.

\emph{Case B:} exactly one component violates convexity.
Without loss of generality, assume
\[
2a_{i}>a_{i-1}+a_{i+1}
\quad\text{and}\quad
2b_{i}\le b_{i-1}+b_{i+1}.
\]
Set again $t:=c_{i-1}+c_{i+1}-c_i$. Then
\[
a_{i-1}<t_1=a_{i-1}+a_{i+1}-a_{i}<a_{i},
\qquad
b_{i}\le t_2=b_{i-1}+b_{i+1}-b_{i}<b_{i-1}.
\]
Thus $\left(t_1,t_2,\frac{at_1+bt_2}{d}\right)$ is a nonnegative solution of
\eqref{eq: diophantine}. It cannot be minimal since $m_{i-1}$ and $m_i$ are consecutive
minimal solutions, so we may write it as a sum of two nonzero
solutions
\[
(u_1,u_2, \tfrac{au_1+bu_2}{d})+(v_1,v_2, \tfrac{av_1+bv_2}{d})
=(t_1,t_2, \tfrac{at_1+bt_2}{d}).
\]
Choose such a decomposition with the first summand minimal, corresponding
to a generator $m_j=x^{u_1}y^{u_2}$.
Since $u_1<t_1<a_{i}$, we have $j<i$; and since $u_2<t_2<b_{i-1}$ while the $y$-exponents
strictly decrease with the index, we must have $j>i-1$. This produces a contradiction.
\end{proof}

A first consequence of convexity is a closed formula for the largest degree of the
minimal semigroup generators of two-dimensional Veronese rings.

\begin{corollary}\label[corollary]{thm.maxdeg2var}
Suppose that $a,b\in \NN$ are coprime. Then the maximum degree of minimal generators of
$V_{(a,b),d}$, denoted $w(V_{(a,b),d})$, is given by
\[
w(V_{(a,b),d})=\max\{\ell_1,\ell_2\},\qquad \ell_1=\lcm(a,d),\ \ \ell_2=\lcm(b,d).
\]
\end{corollary}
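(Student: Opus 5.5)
The plan is to combine \Cref{lem.ConvexGens} with the fact that a linear function restricted to a convex sequence is itself a convex sequence of real numbers, hence is maximized at one of its two endpoints. The degree of a monomial $x^{u}y^{v}$ in the grading $\deg x=a$, $\deg y=b$ is the linear form $f(u,v)=au+bv$. So I would identify the two endpoints of the convex chain of exponent vectors and then show that no interior generator can exceed them.

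\emph{Step 1: the endpoints.} As in the proof of \Cref{lem.ConvexGens}, write the minimal generators as $m_i=x^{a_i}y^{b_i}$, indexed so that $a_1<\cdots<a_s$ and $b_1>\cdots>b_s$.
\begin{itemize}
\item The pure powers of $y$ lying in $V_{(a,b),d}$ are exactly $y^{kn}$ with $n=d/\gcd(b,d)$.
\item The smallest of them, $y^{n}$, is a minimal generator, because any factorization of it in the semigroup uses only pure powers of $y$.
\item Its exponent vector $(0,n)$ has the smallest possible first coordinate, so it must be $c_1$.
\end{itemize}
Symmetrically, $c_s=(d/\gcd(a,d),0)$. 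Their degrees are
\[
f(c_1)=b\cdot\frac{d}{\gcd(b,d)}=\lcm(b,d)=\ell_2, \qquad f(c_s)=a\cdot\frac{d}{\gcd(a,d)}=\lcm(a,d)=\ell_1.
\]

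\emph{Step 2: convexity of degrees.} By \Cref{lem.ConvexGens} the sequence $\{c_i\}$ is convex, so $2c_i\le c_{i-1}+c_{i+1}$ componentwise for $2\le i\le s-1$. Since $a,b\ge 0$, applying $f$ preserves this inequality, giving
\[
2f(c_i)\le f(c_{i-1})+f(c_{i+1}).
\]
Thus the real sequence $\delta_i=f(c_i)$ is convex, meaning the differences $\delta_{i+1}-\delta_i$ are nondecreasing. A standard argument then shows $\delta_i\le\max\{\delta_1,\delta_s\}$ for all $i$: if some interior $\delta_i$ were a strict maximum exceeding both endpoints, some difference before it would be positive and some difference after it negative, contradicting monotonicity of the differences. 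Hence $w(V_{(a,b),d})=\max\{\delta_1,\delta_s\}=\max\{\ell_1,\ell_2\}$, and this maximum is attained because $c_1$ and $c_s$ are themselves generators.

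\emph{Main obstacle.} The only delicate point is Step 1: checking carefully that the pure powers are minimal generators and that they really occupy the endpoint positions of the convex ordering used in \Cref{def:convex}. The ordering in \Cref{lem.ConvexGens} ensures both coordinates are strictly monotone, so this reduces to the observations above.

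Degenerate weights such as $a=0$ or $b=0$ need a separate check. The coprimality hypothesis makes such cases trivial (for example $a=0$ forces $b=1$), so the statement can be verified by hand there. The coprimality of $a$ and $b$ is otherwise not used in the argument.
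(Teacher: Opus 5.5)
Your proof is correct and follows essentially the same route as the paper. Both arguments combine the convexity from \Cref{lem.ConvexGens} with the fact that the endpoints of the chain are the pure powers $y^{d/\gcd(b,d)}$ and $x^{d/\gcd(a,d)}$, of degrees $\ell_2$ and $\ell_1$, so that the linear degree function $au+bv$ is maximized at an endpoint. Your version makes the paper's brief geometric step (``all $c_i$ lie on or below the line $ax+by=\ell$'') precise via the convexity of the real sequence $f(c_i)$, and it also checks that the maximum is attained, which the paper leaves implicit.
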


\begin{proof}
This follows from the convexity in \Cref{lem.ConvexGens}. Set $\ell=\max\{\ell_1,\ell_2\}$
and $\ell'=\min\{\ell_1,\ell_2\}$, and consider the line $ax+by=\ell$.
Both $(\ell/a,0)$ and $(0,\ell/b)$ are lattice points on this line, and $\ell'$ lies on
or below it. By convexity, all other exponent vectors $c_i$ lie on or below the same
line, hence $ac_{1,i}+bc_{2,i}\le \ell$ for all $i$. Therefore every minimal generator
has degree at most $\ell$.
\end{proof}

\begin{remark}
Convexity does not hold for Veronese rings of dimension $n\ge 3$. For example,
\[
V_{(1,6,15),10}
=\kk[x_{1}^{10},x_{1}^{4}x_{2},x_{1}^{5}x_{3},x_{1}^{2}x_{2}^{3},
x_{1}^{3}x_{2}^{2}x_{3},x_{2}^{5},x_{3}^{2},x_{1}x_{2}^{4}x_{3}],
\]
the exponent vectors of the monomial generators fail to form a convex sequence: the
lattice points $(5,0,1)$, $(2,3,0)$, and $(3,2,1)$ lie in the convex hull of the other
exponent vectors. This example also illustrates that an analogous formula to \Cref{thm.maxdeg2var} cannot hold in dimension greater than two, as for the ring above the maximum degree of an algebra generator is $40>30=\max\{\lcm(1,10),\lcm(6,10),\lcm(15,10)\}$.
\end{remark}

We now arrive at our main theorem.

\begin{theorem}\label{thm: 2-dimVeronese}
Two-dimensional normal affine semigroup rings $S$ are
Cohen--Macaulay, Koszul, and admit a
determinantal presentation in which the relations are given by the ideal of $2\times 2$ minors of a matrix  whose entries are nonconstant monomials. Their minimal relations form a Gr\"obner basis. Their  total Betti numbers, and the Betti
numbers of $\kk$ over these rings depend only on the number $s$ of monomial generators of
$S$ as a $\kk$-algebra as follows
\begin{enumerate}
\item $\beta_i^R(S)= i\binom{s-1}{i+1}$ for all $i\ge 1$.
\item \cite{PRS} $\beta_i^{S}(\kk)= (s-2)^{i-2}(s-1)^2$ for all $i\ge 2$.
\end{enumerate}
The graded invariants of two-dimensional Veronese rings additionally depend 
on the degrees $\{d_i\}_{i=1}^s$ of their $\kk$-algebra generators,
as follows:
\begin{enumerate}
\item[(3)] $HS(t,V_{(a,b),d})
= \dfrac{1 + \sum_{A \subseteq [s]}(-1)^{|A| + 1}\gap(A)\, t^{\sum_{k \in A} d_k}}
{\prod_{j = 1}^s (1 - t^{d_j})}$.
\item[(4)] $
\beta_{i,j}(S_\c)=\sum_{A\subseteq [s], |A|=i, \sum_{\ell\in A} d_\ell=j} \gap(A).
$
\end{enumerate}
\end{theorem}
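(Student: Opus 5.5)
The plan is to reduce everything to \Cref{thm: convex semigroup} via the chain of implications recorded at the start of this section. Given a two-dimensional normal affine semigroup ring $S$, \Cref{prop: 2Dnormal_is_Veronese} provides an isomorphism $S\cong V_{(1,c),d}$. It is a $\kk$-algebra isomorphism, not necessarily degree preserving. By \Cref{lem.ConvexGens}, $V_{(1,c),d}$ is a convex semigroup ring $S_\c$, where $\c$ is the sequence of exponent vectors of its minimal monomial generators.

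To invoke \Cref{thm: convex semigroup} I must check that $S_\c$ is closed under monomial division in $\kk[x,y]$. Suppose $\mu_1,\mu_2\in V_{(a,b),d}$ with $\mu_1/\mu_2\in\kk[x,y]$. Then $\deg\mu_1$ and $\deg\mu_2$ are both multiples of $d$. Hence $\deg(\mu_1/\mu_2)$ is a multiple of $d$, so $\mu_1/\mu_2\in V_{(a,b),d}$. This works for every weighted Veronese ring, in any dimension.

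The ungraded properties then transfer through the ring isomorphism. These are Cohen--Macaulayness, normality, and the determinantal presentation by $2\times2$ minors of a matrix with nonconstant monomial entries, from parts (2) and (6). They also include the Gr\"obner basis statement, from part (5), and the total Betti numbers $\beta_i^R(S)=i\binom{s-1}{i+1}$, from part (3). The Betti numbers of $\kk$ come from part (7), or directly from \cite[Theorem 3.9]{PRS} via normality.

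One point needs care: the isomorphism must preserve the monomial algebra generators. It does, because it is induced by a semigroup isomorphism. So $s$ is the same for $S$ and $S_\c$, and the presentation of $S$ is the one pulled back from $S_\c$. For total Betti numbers, use that the presentation ideal $J\subset\kk[t_1,\dots,t_s]$ is the same ideal for both rings. Total Betti numbers are independent of the grading chosen, provided $J$ is homogeneous for some positive grading, and any such grading yields a minimal resolution.

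I expect the Koszul property to be the main obstacle. It is defined through $\gr_\mm S$, which depends only on the $\mm$-adic filtration. Here $\mm$ is the ideal generated by the monomial generators, and the isomorphism maps $\mm$ to $\mm$, so it induces $\gr_\mm S\cong\gr_\mm S_\c$. Koszulness of $S$ then follows from part (6).

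For the graded items (3) and (4), take $S=V_{(a,b),d}$ itself, which is literally $S_\c$ with its own grading. Parts (3) and (4) then follow from \Cref{thm: convex semigroup}(4), with $d_i$ the degrees of the minimal generators in the weighted grading. Here I would check that the proof of part (4) works for any positive grading in which the $t_i\mapsto m_i$ map is homogeneous, not only $\deg t_i=a_i+b_i$. This holds because the linearization argument uses only that the resolution is homogeneous, and the monomial initial ideal carries any $\ZZ^s$-compatible grading.
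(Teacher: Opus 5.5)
Your proposal is correct and follows the paper's proof. You reduce via \Cref{prop: 2Dnormal_is_Veronese} to $V_{(1,c),d}$, obtain convexity from \Cref{lem.ConvexGens}, verify closure under monomial division by the same degree-divisibility argument, and read the conclusions off from \Cref{thm: convex semigroup}. Your extra remarks make explicit two points the paper's short proof leaves implicit: the isomorphism comes from a semigroup isomorphism, so it preserves the minimal monomial generators and $\mm$ (hence $\gr_\mm$), and the argument for part (4) of \Cref{thm: convex semigroup} also works for the weighted grading $\deg x=a$, $\deg y=b$, not only for $d_i=a_i+b_i$.
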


\begin{proof}
By \Cref{prop: 2Dnormal_is_Veronese} there is an isomorphism $S\cong V_{(1,c),d}$ for some integers $c,d$. Since it is a ring homomorphism, it preserves the number of minimal monomial algebra generators. Since the properties preceding and including (1) and (2) are isomorphism invariant, the claim follows from \Cref{thm: convex semigroup} once we verify its hypotheses apply to two-dimensional Veronese rings.  By \Cref{lem.ConvexGens} any Veronese ring $V_{(a,b),d}$ is a convex semigroup ring. It remains to show that it is closed under monomial division in $\kk[x,y]$.
Let $\mu_1,\mu_2$ be monomials in $V_{(a,b),d}$ such that the quotient $\mu_1/\mu_2\in \kk[x,y]$. Then the degree of $\mu_1/\mu_2$ is a nonnegative multiple of $d$, hence $\mu_1/\mu_2\in V_{(a,b),d}$, establishing closure under monomial division.
\end{proof}

\begin{remark}
    \label{rem: normal higher dim}
    By the same reasoning as in the above theorem, all weighted Veronese rings are closed under monomial divison, hence, are normal by \Cref{lem: normal}, and therefore, are Cohen-Macaulay by \cite{Hoc}.
\end{remark}

\subsection{Two segment Veronese rings}\label{s: 3 corners}

In the standard graded case, a two-dimensional Veronese ring corresponds to a semigroup
whose generators lie on a single line segment. 
Here we classify the next simplest case, namely those two-dimensional
Veronese semigroups $V_{(1,c),d}$, with $c,d$ coprime, whose generators lie on exactly two
line segments. This subsumes the general case by \Cref{prop: 2Dnormal_is_Veronese}. As an application of \Cref{thm: convex semigroup}, we obtain an explicit
determinantal presentations in this case in \Cref{thm: 3 corner}.

The next result extends \cite[Theorem 6.2]{Sum}, which treats the case where $d$ is prime. When $d=p$ is prime, the condition
$p=cm+r$ with $r\mid(c-1)$ is equivalent to the formulation in loc.cit.~involving the
inverse of $c$ modulo $p$.

\begin{proposition}\label[proposition]{thm:iff2lines}
Let $c,d\in\NN$ be coprime and write $d=cm+r$ with $0<r<c$. Then $r\mid(c-1)$ if and only if the algebra generators of
the convex semigroup $V_{(1,c),d}$ 
 lie on at most two line segments.
\end{proposition}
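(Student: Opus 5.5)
The plan is to describe the minimal generators of $V_{(1,c),d}$ explicitly as lattice points $(a,b)$ (exponents of $x$ and $y$) with $a+cb\equiv 0 \pmod d$. The guiding observation is that, by \Cref{lem.ConvexGens}, the minimal generators form a convex sequence from $(d,0)$ to $(0,d)$ with strictly decreasing first coordinates. I expect that for each value $a$ that occurs, the generator with first coordinate $a$ is the point with the least $b\ge 0$ satisfying $cb\equiv -a \pmod d$; this $b$ is unique mod $d$ because $\gcd(c,d)=1$.

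\emph{Step 1: the first segment.} For $0\le b\le m$, the point $(d-cb,b)$ lies in the semigroup. It is a minimal generator, since it has the smallest possible nonnegative $x$-exponent for that $y$-exponent and the $x$-exponents strictly decrease. These $m+1$ points are collinear on the segment from $(d,0)$ to $(r,m)$. Continuing along this line would give $(r-c,m+1)$, which has a negative coordinate, so $(r,m)$ is a corner point. Hence the generators lie on at most two segments if and only if all generators with $b\ge m$ lie on the segment from $(r,m)$ to $(0,d)$.

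\emph{Step 2: $r\mid (c-1)$ implies two segments.} Write $c-1=rk$. Then
\[
d-m = m(c-1)+r = r(mk+1).
\]
Consider the points $(r-i,\, m+i(mk+1))$ for $0\le i\le r$; they lie on the segment from $(r,m)$ to $(0,d)$. Using $dk=cmk+rk=cmk+c-1$, one gets $c(mk+1)=dk+1\equiv 1 \pmod d$. Therefore each of these points satisfies $a+cb\equiv d\equiv 0 \pmod d$, so all of them lie in the semigroup. For each first coordinate $a=r-i<r$, the admissible $b$ is determined modulo $d$, and here $b<d$. So these are the lowest points with that first coordinate, and since their $b$'s increase as $a$ decreases, none is a sum of others. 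Any remaining generator must have first coordinate $<r$ and hence coincides with one of these points. Thus the generators consist of the first segment together with these collinear points.

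\emph{Step 3: two segments implies $r\mid (c-1)$.} Suppose all generators after $(r,m)$ lie on the segment from $(r,m)$ to $(0,d)$. The generator with first coordinate $r-1$ then lies on this segment, so the step vector $(-1,e)$ is integral with $re=d-m$. The membership condition gives $ce\equiv 1\pmod d$, say $ce=1+dk$. Multiplying by $r$ gives
\[
c(d-m)=r+rdk .
\]
Substituting $cm=d-r$ gives $cd-d=rdk$, hence $c-1=rk$.

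\emph{Expected obstacle.} The delicate point is justifying, in Step 3, that a generator with first coordinate exactly $r-1$ exists and lies on the second segment. For this I would use that every $a\in\{0,\dots,d\}$ has a lowest semigroup point. Convexity then forces the lowest points with first coordinate below $r$ to be generators unless they are sums of others, and a sum of other generators cannot have smaller $b$ with first coordinate $<r$. The same "one generator per $x$-value" bookkeeping is what makes the minimality claim in Step 2 rigorous.
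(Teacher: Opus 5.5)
Your Steps 1 and 2 are sound. In Step 2, the claim that any further generator has first coordinate $<r$ follows from the distinct, strictly monotone exponents in \Cref{lem.ConvexGens}, and your lowest-point argument for minimality works. The arithmetic in Step 3 is also correct \emph{once} a generator with first coordinate $r-1$ is known to exist. That existence, however, is exactly what is missing, and the remedy you sketch does not supply it. The assertion that lowest points with first coordinate below $r$ are generators is false without the two-segment hypothesis. For example, with $c=5$, $d=13$ (so $m=2$, $r=3$) the generators are $(13,0),(8,1),(3,2),(1,5),(0,13)$, and the lowest point with first coordinate $r-1=2$ is $(2,10)=2\cdot(1,5)$. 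Your sketch never says where the hypothesis enters, so Step 3 is incomplete as written.

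The simplest repair is the paper's choice: use first coordinate $1$ instead of $r-1$. The lowest point $(1,b)$, with $1+cb\equiv 0 \pmod d$ and $b<d$, is \emph{always} a minimal generator. Indeed, in any decomposition some summand has $x$-exponent $0$, so it is $y^{b'}$ with $d\mid b'$, hence $b'\ge d>b$. If $r=1$ there is nothing to prove. If $r\ge 2$, then $(1,b)$ is not on the first segment, so by hypothesis it lies on the second. This gives $b=d-e$ with $e=(d-m)/r\in\ZZ$, and $1+cb\equiv 0$ becomes $ce\equiv 1\pmod d$, after which your computation yields $c-1=rk$.

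Alternatively, keep $r-1$ but use the hypothesis explicitly. Suppose the lowest point $(r-1,\beta)$, with $\beta<d$, were a sum of $N\ge 2$ generators. Each summand has first coordinate $\le r-1<r$, so all lie on the line through $(r,m)$ and $(0,d)$. Their $y$-coordinates then sum to $(N-1)d+m+e>d$, a contradiction.

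Finally, like the paper, you implicitly need $c<d$, i.e.\ $m\ge 1$, for the reduction in Step 1. For $c>d$ the converse genuinely fails: with $c=7$, $d=5$ the generators $(5,0),(3,1),(1,2),(0,5)$ lie on two segments although $r=5\nmid 6$.
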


\begin{proof}
Let  $q=\frac{c-1}{r}$. Assume first that $r\mid(c-1)$ so that $q$ is an integer. Define
\[
S_1=\{x^{(m-i)c+r}y^i \mid i=0,\dots,m\},\qquad
S_2=\{x^j y^{d-(qm+1)j} \mid j=0,\dots,r-1\}.
\]
The points corresponding to $S_1$ lie on the line $x+cy=d$, while those corresponding to
$S_2$ lie on the line $y=-(qm+1)x+d$. The line segments between $(r,m)$ and $(d,0)$ on the first line and between $(0,d)$ and $(r,m)$ on the second line contain as integer points exactly the exponents listed above.

We claim that $S_1\cup S_2$ minimally generates $V_{(1,c),d}$ as $\kk$-algebra.
Suppose the contrary that $x^j y^i\notin S_1\cup S_2$ is a minimal generator. If $i\le m$,
then $x^{(m-i)c+r}y^i\in S_1$, forcing $d\mid |(m-i)c+r-j|$. Since $(m-i)c+r\leq  mc+r=d$, we must have $j=(m-i)c+r +md$, for some $m>0$.
Hence
$x^j y^i=(x^d)^m x^{(m-i)c+r}y^i$, contradicting minimality. A symmetric argument excludes
$j\le r$, and if $i\ge m$ and $j\ge r$, then $x^{j-r}y^{i-m}\in V_{(1,c),d}$, so
$x^j y^i$ is not minimal. Hence $S_1\cup S_2$ is precisely the minimal generating set.

Conversely, assume that all minimal generators of $V_{(1,c),d}$ lie  on two lines. Since $x^d,x^ry^m$, and $y^d$ are minimal generators, by convexity, the equations of the two lines are given by $x+cy=d$ and $y=-(qm+1)x+d$.
First, we note that there exists some $b \in \mathbb N$ such that $xy^b$ is a minimal $\kk$-algebra generator for any $V_{(1,c),d}$. Indeed, since $c$ is invertible modulo $d$, there exists $b\in\{1,\dots,d-1\}$ such that
$1+bc=dk$.
When $k=1$, then the generator $xy^b$ lies on the first line, and one has the relation $1+bc=cm+r$. This last relation requires $r=1$ since $r$ is the remainder modulo $c$, so it follows immediately that $r|(c-1)$. On the other hand, if $k>1$, $xy^b$ must lie on the second line, which forces
\[
b=d-(mq+1)=\frac{d(rc-c+1)-r}{rc}.
\]
Substituting into $1+bc=dk$ yields
\[
k=c-\frac{c-1}{r},
\]
which is an integer only if $r\mid(c-1)$.
\end{proof}

The following result illustrates \Cref{const: Ac} and \Cref{lem: fill Ac} in a situation where the matrix therein can be described explicitly.

\begin{proposition}\label[proposition]{thm: 3 corner}
Let $d=cm+r$ with $r\mid(c-1)$. Set $q=\frac{c-1}{r}$. Then:
\begin{enumerate}
\item $\kk[t_1,\dots,t_{m+r+1}]\twoheadrightarrow V_{(1,c),d}$, where
\[
t_{i+1}\mapsto  x^{(m-i)c+r}y^i \quad (i=0,\dots,m),\qquad
t_{m+r+1-j} \mapsto x^j y^{d-(mq+1)j} \quad (j=0,\dots,r-1).
\]
\item The defining ideal is generated by the $2\times 2$ minors of $M$, where
$$M = \begin{bmatrix}
            t_1 & t_2 & \cdots & t_{m-1} & t_m & t_{m+1}^{q + 1} & t_{m+1}t_{m+2} & t_{m+1}t_{m+3} & \cdots & t_{m+1}t_{m+r-1}\\
            t_2 & t_3 & \cdots & t_m & t_{m+1} & t_{m+2} & t_{m+3} & t_{m+4} & \cdots & t_{m+r} \\
            t_3t_{m+1}^q & t_4t_{m+1}^q & \cdots & t_{m+1}^{q +1} & t_{m+2} & t_{m+3} & t_{m+4} & t_{m+5} & \cdots & t_{m+r+1}
        \end{bmatrix}$$
\end{enumerate}
\end{proposition}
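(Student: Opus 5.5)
Part (1) follows from the proof of \Cref{thm:iff2lines}: with $r\mid(c-1)$ and $q=\frac{c-1}{r}$, the minimal monomial algebra generators of $V_{(1,c),d}$ are exactly $S_1\cup S_2$. Ordered by increasing $x$-exponent... more conveniently, by increasing $y$-exponent along the first segment and then along the second, they are the images of $t_1,\dots,t_{m+r+1}$ as listed. The two lists overlap only in $x^ry^m$, which is $t_{m+1}$ from $i=m$; the index $j=r$ is excluded from $S_2$, so $m+r+1$ is the correct count. The map is therefore surjective. By \Cref{lem.ConvexGens} and the proof of \Cref{thm: 2-dimVeronese}, $V_{(1,c),d}$ is a convex semigroup ring closed under monomial division. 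Its corner points are $t_1$, $t_{m+1}$ and $t_{m+r+1}$. The skeleton matrix of \Cref{const: Ac} therefore consists of the block $M_1$ (a $2\times m$ matrix in $t_1,\dots,t_{m+1}$) and the block $M_2$ (a $2\times r$ matrix in $t_{m+1},\dots,t_{m+r+1}$), overlapping at $t_{m+1}$. Up to reindexing of the degenerate cases $m=1$ or $r=1$, this is exactly the pattern of variables in $M$: rows $1$--$2$ of the first $m$ columns, and rows $2$--$3$ of the last $r$ columns. The entries $t_{m+1}^{q+1}$ in positions $(1,m+1)$ and $(3,m)$ are not variables and play the role of fill-ins.

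By \Cref{thm: convex semigroup}(2), once the blanks are filled with monomials of degree $\ge 2$ so that all $2\times 2$ minors vanish under substitution, the ideal of minors is the whole defining ideal. So it suffices to check that every $2\times2$ minor of $M$ maps to zero. I will write $m_k$ for the image of $t_k$. On the first segment consecutive ratios are constant, $m_{i+1}/m_i=y/x^c$. On the second segment, $m_{k+1}/m_k=y^{qm+1}/x$ for $k\ge m+1$. This holds because of the indexing $t_{m+r+1-j}\mapsto x^jy^{d-(mq+1)j}$ and the identity $d-(mq+1)r=m$, which is equivalent to $cm+r-mqr-r=m$, i.e.\ $qr=c-1$.

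The filled entries are then checked one at a time. In row 3, column $i$ ($i\le m-1$), the entry $t_{i+2}t_{m+1}^q$ must satisfy $m_{i+2}m_{m+1}^q/m_{i+1}=m_{m+2}/m_{m+1}$. The left side equals $m_{m+1}^q\,y/x^c=x^{rq}y^{mq+1}/x^c=y^{mq+1}/x$ since $rq=c-1$, and this matches the right side. The same computation covers column $m$, where the entry is $t_{m+1}^{q+1}$. In row 1, column $m+1$, the entry $t_{m+1}^{q+1}$ must satisfy $m_{m+1}^{q+1}/m_{m+2}=m_m/m_{m+1}$, which is the same identity rearranged. In row 1, columns $m+j$ with $j\ge 2$, the entry $t_{m+1}t_{m+j}$ must satisfy $m_{m+1}m_{m+j}/m_{m+j+1}=m_{m+1}\,x/y^{qm+1}$. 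This agrees with $m_m/m_{m+1}=x^c/y$ precisely when $x^{c-1}=y^{qm}\cdot(\text{stuff})$; more carefully, the ratio row1/row2 in column $m+1$ already equals $m_{m+1}^{q+1}/m_{m+2}$, and the ratio in column $m+j$ equals $m_{m+1}m_{m+j}/m_{m+j+1}$. Both reduce to $m_{m+1}\cdot x/y^{qm+1}$ by the constant-ratio property, so they agree.

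Once the row-ratios (row 1/row 2 and row 2/row 3) are shown to be constant across all columns, the matrix has rank one after substitution, and every $2\times 2$ minor, including those mixing rows 1 and 3, vanishes. The main obstacle is this bookkeeping of exponents, which rests on the single identity $qr=c-1$, together with handling the edge cases $m=1$ or $r\le 2$, where some column types are absent or blocks degenerate. Finally, every fill-in has degree $\ge2$, as \Cref{thm: convex semigroup}(2) requires; this holds since $q\ge 1$ whenever $c\ge2$, and the case $c=1$ is trivial.
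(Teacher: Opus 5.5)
Your strategy is the paper's own. You identify $M$ as the skeleton matrix of Construction~\ref{const: Ac} with fill-ins, check that every $2\times 2$ minor vanishes under $t_k\mapsto m_k$, and invoke Theorem~\ref{thm: convex semigroup}(2). The gap is in the check for row~1, columns $m+k$ with $2\le k\le r-1$.

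Write $\beta=x/y^{qm+1}$ for the common ratio $m_k/m_{k+1}$ on the second segment. In column $m+1$ the row-1/row-2 ratio is $m_{m+1}^{q+1}/m_{m+2}=m_{m+1}^{q}\beta=x^{rq+1}/y=x^c/y$. In column $m+k$ it is $m_{m+1}m_{m+k}/m_{m+k+1}=m_{m+1}\beta=x^{r+1}y^{m-qm-1}$. These agree only when $q=1$. Your sentence ``both reduce to $m_{m+1}\cdot x/y^{qm+1}$'' silently drops the factor $m_{m+1}^{q-1}$ from the first ratio. Your abandoned line about $x^{c-1}=y^{qm}\cdot(\text{stuff})$ was already detecting this.

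This is not just a flaw in the write-up: for $q\ge 2$ and $r\ge 3$ the matrix as printed does not have vanishing minors. Take $c=7$, $d=17$, so $m=2$, $r=3$, $q=2$. The generators are $x^{17},x^{10}y,x^3y^2,x^2y^7,xy^{12},y^{17}$. The last two columns of $M$ are $(t_3^3,t_4,t_5)^T$ and $(t_3t_4,t_5,t_6)^T$. The minor $t_3^3t_5-t_3t_4^2$ maps to $x^{10}y^{18}-x^{7}y^{16}\neq 0$, so it is not in the defining ideal.

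The paper fills the skeleton using the corner relation $m_{m+2}/m_{m+1}=m_{m+1}^{q+1}/m_m$ together with the collinearity relations on the second segment. Carried out correctly, this gives the entry $t_{m+1}^{q}t_{m+k}$ in row~1, column $m+k$. That entry specializes to $t_{m+1}^{q+1}$ at $k=1$, and agrees with the printed $t_{m+1}t_{m+k}$ only when $q=1$. With the corrected entry, the row-1/row-2 ratio is $m_{m+1}^q\beta=x^c/y$ in every column. The fill-ins are still monomials of degree $q+1\ge 2$ in $t_1,\dots,t_{s-1}$, and your rank-one argument then goes through verbatim.

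So the right conclusion was that the printed matrix has a misprint for $q\ge 2$, not an assertion of a false identity. A minor point as well: the row-3 entry $t_{m+1}^{q+1}$ sits in column $m-1$, as the case $i=m-1$ of $t_{i+2}t_{m+1}^q$. Column $m$ is the pure skeleton column $(t_m,t_{m+1},t_{m+2})^T$, so your remark that ``the same computation covers column $m$'' misplaces it.
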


\begin{proof}
Part~(1) follows from the description of generators in the proof of
\Cref{thm:iff2lines}. Part~(2) follows from~\Cref{const: Ac} and
\Cref{thm: convex semigroup}. The skeleton matrix is filled using the following relations from \Cref{rem: filling}, where $m_i$ is the monomial in $\kk[x,y]$  corresponding to $t_i$ 
\[
\frac{m_{j+2}}{m_{j+1}}=\frac{m_{j+1}}{m_j}
\quad\text{for collinear triples } (j\neq m), 
\qquad
\frac{m_{m+2}}{m_{m+1}}=\frac{m_{m+1}^{q+1}}{m_{m}}
\quad\text{at the corner.}\qedhere
\]
\end{proof}


\section{Higher-dimensional Veronese rings}\label{s: higher dimensional Veronese rings}

In this section we illustrate several ways in which Veronese rings of dimension at least
three differ from their two-dimensional counterparts.

\subsection{Failure of determinantal presentations}

Unlike the two-dimensional case, higher-dimensional Veronese rings need not admit
determinantal presentations.

\begin{proposition}\label[proposition]{prop: V_{(3,4,5),6}}
The ring $V_{(3,4,5),6}$ does not admit a determinantal presentation.
\end{proposition}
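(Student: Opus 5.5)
The plan is to compute the presentation of $V_{(3,4,5),6}$ explicitly and then compare its invariants with those forced by every possible determinantal shape.

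\emph{Generators and relations.} A monomial $x^ay^bz^c$ lies in $V_{(3,4,5),6}$ iff $3a+4b+5c\equiv 0 \pmod 6$. This is equivalent to two congruences: $a+c$ is even (reducing mod $2$), and $b\equiv c \pmod 3$ (reducing mod $3$). Enumerating by the exponent $c$ and discarding products, I expect the minimal generators to be
\[
t_1=x^2,\ t_2=y^3,\ t_3=xyz,\ t_4=y^2z^2,\ t_5=xz^3,\ t_6=yz^4,\ t_7=z^6,
\]
so $s=7$. The ring has dimension $3$, so the toric ideal $J\subset Q=\kk[t_1,\dots,t_7]$ has height $4$.

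Next I would compute the minimal binomial generators of $J$, by hand from exponent vectors and cross-checked with a Gröbner basis computation. Candidates include $t_3^2-t_1t_4$, $t_3t_4-t_2t_5$, $t_3t_5-t_1t_6$, $t_4^2-t_2t_6$, $t_4t_5-t_3t_6$, $t_5^2-t_1t_7$, $t_4t_6-t_2t_7$, $t_5t_6-t_3t_7$ and $t_6^2-t_4t_7$. I would check whether $J$ is generated by quadrics (standard degree) and record the graded Betti numbers of $Q/J$. These Betti numbers can also be computed in the weighted grading $\deg t_i=(6,12,12,18,18,24,30)$.

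\emph{Reduction to linear matrices.} Suppose $J=I_2(M)$ for a matrix $M$ whose entries are nonconstant monomials. If all minimal generators of $J$ are quadrics in the $t_i$, every minor contributing a minimal generator must come from a $2\times 2$ submatrix whose entries are variables. Minors involving a higher-degree entry have both terms of degree $\ge 3$. Such minors therefore lie in $\mm_Q J$ and are redundant, or else they produce non-quadratic minimal generators, which would contradict the computation. So, after discarding redundant rows, columns and entries, $J$ is generated by $2\times2$ minors of a matrix of variables. Since $\operatorname{ht} J=4\le (p-1)(q-1)$, the Eagon--Northcott/Bruns--Vetter height bound leaves only finitely many shapes, essentially $2\times q$ with $q\ge 5$ and $3\times 3$ (with larger shapes ruled out by the number of distinct quadrics).

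\emph{Comparison of invariants.} When the height is maximal, $I_2$ is perfect with a known resolution. For $2\times 5$ the Eagon--Northcott complex gives Betti numbers $1,10,20,15,4$. For $3\times 3$ the Gulliksen--Negård complex gives $1,9,16,9,1$, a Gorenstein ring. I would compare these with the Betti numbers of $Q/J$ computed above. A mismatch in the number of quadrics or in the projective dimension or rank of the last module excludes each shape. For degenerate (non-generic) linear matrices I would use an additional argument. Each minor is a binomial whose terms are products of entries, and the relation $t_3^2=t_1t_4$ forces $t_3$ to sit diagonally opposite to itself, that is, in a symmetric position. Tracking which variables can occupy which positions, for example which entries share a row with $t_3$, should lead to a contradiction with the list of binomials.

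I expect the main obstacle to be this last step. Making rigorous that no degenerate matrix of variables, possibly with repeated variables and zero-free but non-generic, realizes $J$ needs a careful case analysis. I would first try to settle it purely numerically, using $\beta_1(J)$ and the Hilbert series in the weighted grading, before resorting to combinatorial placement arguments.
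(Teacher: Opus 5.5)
Your generators and nine quadratic binomials are correct; they are the paper's list with the labels of $xyz$ and $y^3$ swapped. The strategy built on them has a genuine gap, however.

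\textbf{The numerical comparison cannot exclude the $3\times 3$ shape.} Since $xyz\in V_{(3,4,5),6}$, the interior monomials form the principal ideal generated by $xyz$, so $Q/J$ is Gorenstein. It is standard graded of codimension $4$ with $h$-vector $(1,4,1)$. This forces $\beta_{1,2}=9$, $\beta_{2,3}=16$, $\beta_{3,4}=9$ and $\beta_{4,6}=1$, which are exactly the Gulliksen--Neg{\aa}rd numbers.

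One checks that even in the grading $\deg t_i=(6,12,12,18,18,24,30)$ the graded Betti numbers coincide with those of the Gulliksen--Neg{\aa}rd resolution of a $3\times 3$ matrix with row degrees $(0,6,12)$ and column degrees $(6,12,18)$. So no comparison of the number of quadrics, projective dimension, last Betti number or Hilbert series can give a contradiction. The placement argument you postpone is not a fallback; it is the entire proof.

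\textbf{The reductions before it are also unjustified.} Knowing that the minimal generators come from $2\times 2$ submatrices of variables does not give you a submatrix consisting of variables. Those submatrices need not sit inside a common one, and individual entries cannot be discarded. Separately, $\operatorname{ht} I_2(M)\le (p-1)(q-1)$ bounds the size of $M$ from below, not from above, so it does not leave finitely many shapes.

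\textbf{The missing idea} is the one the paper uses, and it works for an arbitrary matrix $M$ of nonconstant monomials. Each quadratic monomial occurs in at most one of the nine binomials, and $J$ is generated by them and contains no monomials. Hence $t_3^2-t_1t_4$ must itself be, up to sign, a minor of $M$. After permuting, $M$ has the block $\begin{pmatrix} t_3 & t_1\\ t_4 & t_3\end{pmatrix}$ in its upper left corner (your labels).

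Next, $t_2t_3$ is the only monomial in the $t_i$ mapping to $xy^4z$. So no nonzero binomial containing $t_2t_3$ lies in $J$. On the other hand, $t_2$ must occur in $M$ because $t_4^2-t_2t_6\in J$.

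Wherever $t_2$ sits, some $2\times 2$ submatrix has $t_3$ and $t_2$ on a diagonal:
\begin{itemize}
\item if $t_2$ is in the first two rows: $\begin{pmatrix} t_1 & t_2\\ t_3 & \ast\end{pmatrix}$ or $\begin{pmatrix} t_3 & \ast\\ t_4 & t_2\end{pmatrix}$;
\item if $t_2$ is in the first two columns: the analogous submatrices;
\item otherwise: $\begin{pmatrix} t_3 & a\\ b & t_2\end{pmatrix}$.
\end{itemize}
Its minor is a nonzero binomial containing $t_2t_3$, which is impossible. In the last case, if the minor vanishes then $\{a,b\}=\{t_2,t_3\}$, so $t_2$ lies in the first row or column and an earlier case applies.

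Your remark that $t_3$ sits diagonally opposite itself is the right starting point. Without the isolated monomial $t_2t_3$, though, your plan has no mechanism that actually produces the contradiction.
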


\begin{proof}
A  computation in \texttt{Macaulay2} \cite{M2} shows that
\[
V_{(3,4,5),6}
=\kk[x_1^2,x_1x_2x_3,x_2^3,x_1x_3^3,x_2^2x_3^2,x_2x_3^4,x_3^6].
\]
Let $Q=\kk[t_1,\ldots,t_7]$ and map $t_i$ to the generators above, in the given order.
The kernel $I$ of this map is generated by the quadratic binomials
\begin{multline*}
I=(t_2^2-t_1t_5,\ t_3t_4-t_2t_5,\ t_2t_4-t_1t_6,\ t_5^2-t_3t_6,\ t_4t_5-t_2t_6,\\
t_4^2-t_1t_7,\ t_5t_6-t_3t_7,\ t_4t_6-t_2t_7,\ t_6^2-t_5t_7).
\end{multline*}

Observe that: (1)~the monomial $t_2t_3$ does not appear in any generator of $I$; (2)~
 $I$ is generated by quadratic binomials, each quadratic monomial appearing in at
most one generator; (3)~$I$ is prime, hence none of these binomials is reducible.

Suppose $I=I_2(M)$ for some matrix $M$ with nonconstant monomial entries.
Since $t_2^2-t_1t_5\in I$, by~(2)~we may assume (up to row and column permutations) that
\[
M=\begin{pNiceMatrix}
t_2 & t_1  & \Block{2-2}<\Large>{A}  \\
t_5 & t_2 & & \\

\Block{2-2}<\Large>{B} & &  \Block{2-2}<\Large>{C}\\
\\
\end{pNiceMatrix}.
\]
The relation $t_5^2-t_3t_6\in I$ then forces $t_3$ to appear as an entry of $M$. We show that some nonzero $2\times 2$ minor of $M$ must involve $t_2t_3$, contradicting item~(2). We have three cases in each of which the given minor(s) involve the monomial $t_2t_3$.

 If $t_3$ is one of the entries of $A$, then we have one of the following two minors in $M$
        $$
            \begin{bmatrix}
               t_2 & \ast \\
               t_5 & t_3
            \end{bmatrix}
        \quad \mbox{or} \quad \begin{bmatrix}
            t_1 & t_3 \\
            t_2 & \ast
        \end{bmatrix}.
        $$

Similarly, if $t_3$ is an entry of $B$, we have one of the two minors in $M$
        $$
            \begin{bmatrix}
               t_2 & t_1 \\
               \ast & t_3
            \end{bmatrix}
        \quad \mbox{or} \quad \begin{bmatrix}
            t_5 & t_2 \\
            t_3 & \ast
        \end{bmatrix}.
        $$

Finally, if $t_3$ is an entry of $C$, then taking the first row and column, and the row and column where $t_3$ appears, gives the following minor of $M$
        $$
            \begin{bmatrix}
                t_2 & a \\
                b & t_3
            \end{bmatrix}.
        $$
 Having obtained a contradiction in each case we conclude that $I$ cannot be generated by the $2\times 2$ minors of $M$.       
\end{proof}

\subsection{Failure of the Koszul property}

Higher-dimensional Veronese rings may also fail to be Koszul.

\begin{example}\label[example]{ex: notKoszul}
The ring $V_{(3,4,5),15}$ is not Koszul.
\end{example}

A computation in \texttt{Macaulay2} \cite{M2} yields
\begin{equation}\label{eq: nonKoszul}
V_{(3,4,5),15}
=\kk[x_1^5,x_1^2x_2x_3,x_1x_2^3,x_3^3,x_2^5x_3^2,x_2^{10}x_3,x_2^{15}].
\end{equation}
Let $Q=\kk[t_1,\ldots,t_7]$ map onto $V_{(3,4,5),15}$ via the generators in
\eqref{eq: nonKoszul}. The kernel contains the cubic relation
\[
t_2^3-t_1t_3t_4,
\]
which survives in the associated graded ring $\gr_\mm(Q/I)$. Since this relation is
minimal and non-quadratic, the ring is not Koszul.

The previous example is the first instance of two family of non-Koszul weighted Veronese rings.

\begin{proposition}\label[proposition]{prop: nonKoszul}
The following families of Veronese rings are not Koszul:
\begin{enumerate}
    \item $V_{(3,3k+1,3k+2,w_4,\ldots,w_n),\,3(3k+2)}$ for any integers $k\geq 1$ and $w_4,\ldots,w_n>3(3k+2)$;
    \item $V_{(f+1,f+2,f^2+f-1,w_4,\ldots,w_n),\,3(f^2+f+1)}$ for any integers $f\geq 2$ and $w_4,\ldots,w_n>3(f^2+f+1)$.
\end{enumerate}
\end{proposition}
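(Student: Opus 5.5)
The approach is to generalize \Cref{ex: notKoszul}. For each family we exhibit a minimal monomial generator $u$ and three further minimal generators $g_1,g_2,g_3$ with $u^3=g_1g_2g_3$. We then show that the cubic binomial $T^3-t_{g_1}t_{g_2}t_{g_3}$ is a minimal generator of the defining ideal of $\gr_\mm V$. Since a Koszul algebra has a defining ideal generated by quadrics, this rules out Koszulness.

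For family (1) the data are explicit. Take $u=x_1^{k+1}x_2x_3$, which has degree $3(k+1)+(3k+1)+(3k+2)=9k+6=d$. Take $g_1=x_1^{3k+2}$, $g_2=x_3^3$ and $g_3=x_1x_2^3$, whose degrees are $d$, $d$ and $3+3(3k+1)=d$ respectively. Then $u^3=x_1^{3k+3}x_2^3x_3^3=g_1g_2g_3$; for $k=1$ this is exactly the relation $t_2^3-t_1t_3t_4$ of \Cref{ex: notKoszul}. For family (2) I would find the analogous quadruple by the same recipe. One takes a monomial $u$ of degree $d$ with small exponents and $u^3$ divisible by a pure power $x_i^{d/\gcd(w_i,d)}$, then solves the residual congruences modulo $d$. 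I would start with the case $f=2$, i.e.\ $V_{(3,4,5),21}$, to read off the pattern, since the pure powers $x_1^{d/\gcd(f+1,d)}$, $x_2^{d/\gcd(f+2,d)}$ and $x_3^{d/\gcd(f^2+f-1,d)}$ are computable from $\gcd$'s with $f^2+f+1$.

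The heart of the proof is a fiber computation in the fine $\NN^n$-grading. Let $e$ be the exponent vector of $u^3$. Then $e$ involves only $x_1,x_2,x_3$, so every factorization of $x^e$ uses only generators of $V$ lying in $\kk[x_1,x_2,x_3]$. This is why the hypothesis $w_i>d$ for $i\ge 4$ lets those variables be ignored: the relevant multidegree component of $Q/J$ and of $\gr_\mm$ only sees the subring in three variables.

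Within that subring, I would enumerate all monomials $x_1^ix_2^jx_3^l$ dividing $x^e$ and satisfying $w_1i+w_2j+w_3l\equiv 0 \pmod d$; this is a finite congruence check with $j,l\le 3$. From this list I would verify three things:
\begin{enumerate}
\item $u,g_1,g_2,g_3$ are minimal generators.
\item Every factorization of $x^e$ into minimal generators has exactly three factors, so $x^e\in\mm^3\setminus\mm^4$ and the binomial $T^3-t_{g_1}t_{g_2}t_{g_3}$ is homogeneous in $\gr$. Equivalently, it lies in the kernel of $\psi$ with nonzero leading form, and it is its own initial form.
\item The only factorizations are $u\cdot u\cdot u$ and $g_1g_2g_3$, or more generally, in the factorization graph whose edges join factorizations sharing a factor, the two are in different connected components.
\end{enumerate}
Item (3) is precisely the statement that the degree-$e$ part of the ideal generated by the lower-degree (quadratic) relations of $\gr_\mm V$ does not contain $T^3-t_{g_1}t_{g_2}t_{g_3}$. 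Such relations are multihomogeneous binomials, and multiplying them by monomials only connects factorizations sharing a factor. So the cubic is a minimal relation.

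The main obstacle is item (3) uniformly in $k$ (resp.\ $f$): one must show that no other generator, such as some $x_1^ix_2^jx_3^l$ with $j+l\ge 2$, divides $x^e$ and creates a bridging factorization. I would handle this by reducing the congruence $w_1i+w_2j+w_3l\equiv 0\pmod d$ case by case over the sixteen pairs $(j,l)\in\{0,\dots,3\}^2$, each giving at most one admissible residue class for $i\le 3k+3$. Family (2) will require the same analysis, with the additional first step of pinning down $u$ and the $g_i$.
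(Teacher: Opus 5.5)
\textbf{The gap: family (2) is not proved.} For this family you give no relation at all. ``Find the analogous quadruple by the same recipe'' is a search plan, not an argument.

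The search also starts at the wrong ring. The modulus $3(f^2+f+1)$ in the statement is a misprint for $3(f^2+f-1)=3w_3$, for three reasons. It is the value the paper's proof actually uses: its four monomials all have that degree. It parallels $d=3w_3$ in family (1). And for $f=2$ it gives $V_{(3,4,5),15}$, the ``first instance'' from \Cref{ex: notKoszul}.

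In $V_{(3,4,5),21}$ your recipe produces
\[
(x_1^4x_2x_3)^3=x_1^7\cdot x_1^3x_2^3\cdot x_1^2x_3^3,
\]
but by your own criterion this cubic is not a minimal relation. The third factorization $x_1^7\cdot x_1^4x_2x_3\cdot x_1x_2^2x_3^2$ shares a factor with each of the other two. So the cubic lies in the ideal generated by the quadratic relations coming from $(x_1^4x_2x_3)^2=x_1^7\cdot x_1x_2^2x_3^2$ and $x_1^4x_2x_3\cdot x_1x_2^2x_3^2=x_1^3x_2^3\cdot x_1^2x_3^3$, both in degree $42$.

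\textbf{The missing input is the explicit data the paper supplies.} Take $d=3(f^2+f-1)$. Run through $c=\deg_{x_3}\in\{0,1,2,3\}$ and reduce modulo $f+1$. This shows that the only monomials of degree $d$ in $x_1,x_2,x_3$ are
\[
u=x_1^{f}x_2^{f-1}x_3,\qquad x_1^{2f+1}x_2^{f-2},\qquad x_1^{f-1}x_2^{2f-1},\qquad x_3^3,
\]
and that $u^3=x_1^{2f+1}x_2^{f-2}\cdot x_1^{f-1}x_2^{2f-1}\cdot x_3^3$. The monomial $x^e=u^3$ has degree $3d$, so every factor in a three-factor factorization has degree exactly $d$. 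Since the four monomials above are all the degree-$d$ monomials, $x^e$ has exactly two such factorizations, and they share no factor. Your connectivity criterion then finishes the proof.

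\textbf{Family (1) is fine.} There your quadruple $u=x_1^{k+1}x_2x_3$, $g_1=x_1^{3k+2}$, $g_2=x_3^3$, $g_3=x_1x_2^3$ is correct. Your fine-graded connectivity criterion is a more rigorous certificate of minimality in $\gr_\mm V$ than the paper's sketch. Quadratic monomial relations, which your ``binomials only'' remark overlooks, cannot interfere, because a degree-$3d$ monomial has no four-factor factorization. The step you call the main obstacle is also easy. The same degree argument applies, and reducing modulo $3$ forces $j\equiv l\pmod 3$. That leaves $u,g_1,g_2,g_3$ as the only degree-$d$ monomials in $x_1,x_2,x_3$.

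Your fine-graded argument never actually uses $w_i>d$: divisibility by $x^e$ already excludes $x_4,\dots,x_n$. The paper needs that hypothesis only for its degree-based reasoning.
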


\begin{proof}
The two families exhibit similar behavior resulting in a cubic generator for the defining ideal of the respective Veronese ring. We give a detailed proof for the more involved case (2). Case (1) is established by an analogous argument.


(2) Let $R=\kk[x_1,\ldots,x_n]$ with
\[
\deg(x_1)=f+1,\quad \deg(x_2)=f+2,\quad \deg(x_3)=f^2+f-1,\quad
\deg(x_i)=w_i>3(f^2+f+1)\ (i\ge4).
\]
Degree considerations show that any monomial of degree $3(f^2+f+1)$ involves only
$x_1,x_2,x_3$. Solving
\[
a(f+1)+b(f+2)+c(f^2+f-1)=3(f^2+f-1)
\]
with $0\le c\le3$ yields exactly four solutions, corresponding to the monomials
\[
x_1^fx_2^{f-1}x_3,\quad x_1^{2f+1}x_2^{f-2},\quad
x_1^{f-1}x_2^{2f-1},\quad x_3^3.
\]
For example, if $c=0$ the equation becomes
  \[
a(f+1)+b(f+2)=3(f^2+f-1)=3f(f+1)-3
\]
  which implies $b\equiv -3 \pmod{f+1}$. As $0\leq b<3f$ we obtain the possibilities $b\in\{f-2, 2f-1\}$. Solving for $a$ one obtains the second and third monomials listed above. The analysis in the remaining cases is similar, thus we omit it.

Mapping $Q=\kk[t_1, t_2, t_3, t_4, \ldots]$ onto the Veronese ring via these generators produces the
relation $t_1^3-t_2t_3t_4$. No quadratic relations exist among $t_1,\ldots,t_4$, and any
relation involving the remaining variables has strictly larger degree. As it is homogeneous with respect to the standard grading and there are no relations of the weighted Veronese ring whose initial terms are quadratic and involve only $t_1, t_2, t_3, t_4$ by the previous considerations,
$t_1^3-t_2t_3t_4$ is also a minimal relation of the associated graded ring of the given Veronese, which therefore is
not Koszul.
\end{proof}


\bibliographystyle{amsalpha}
\bibliography{biblio}

\bigskip


\end{document}